\documentclass[10pt]{amsart}

\usepackage[english]{babel}

\usepackage{amsmath}
\usepackage{amssymb}
\usepackage{amsfonts}
\usepackage{graphicx}

\newtheorem{lemma}{Lemma}
\newtheorem{theorem}{Theorem}
\newtheorem{definition}{Definition}
\newtheorem{corollary}{Corollary}
\newtheorem{proposition}{Remark}
\newtheorem{ob}{Notation}
\newtheorem{prim}{Example}
\newtheorem{question}{Question}

\newcommand{\inftree}{{}^{<\omega}\omega}
\newcommand{\bairespace}{\omega^\omega}
\newcommand{\baireset}{{}^\omega\omega}

\begin{document}
\renewcommand{\refname}{References}

\thispagestyle{empty}

\title[Open images of the Sorgenfrey line]{Open images of the Sorgenfrey line}
\author{{Vlad Smolin}}
\address{Vlad Smolin
\newline\hphantom{iii} Krasovskii Institute of Mathematics and Mechanics,
\newline\hphantom{iii} Sofia Kovalevskaya street, 16,
\newline\hphantom{iii} 620990, Ekaterinburg, Russia}
\email{SVRusl@yandex.ru}

\maketitle{\small
\begin{quote}
\noindent{\sc Abstract. } We give a description of Hausdorff continuous open images of the Sorgenfrey line: these are precisely those spaces that have a Sorgenfrey base. Using this description we prove that no Hausdorff compact space that contains a copy of the Sorgenfrey line is a continuous open image of it; in particular the double-arrow space is not a continuous open image of the Sorgenfrey line. \medskip

\noindent{\bf Keywords:} Sorgenfrey line, Souslin scheme, open map, double-arrow space, Hausdorff compact space
 \end{quote}
}

\section{Introduction}

A continuous map is called open if the image of an open set under this map is open.

Results in the paper arose from the questions that were posed to the author by E. G. Pytkeev and M. A. Patrakeev:
\begin{question} \label{PytkeevQues}
    Is the double-arrow (two-arrow) space a continuous open image of the Sorgenfrey line?
\end{question}

\begin{question}
    Suppose that a compact space is a continuous open image of the Sorgenfrey line. What can we say about this space?
\end{question}

Previously, continuous open images of the Sorgenfrey line were studied in the class of metrizable spaces. In \cite{7} S. A. Svetlichnyi proved that if a metrizable space is a continuous open image of the Sorgenfrey line, then it is a Polish space, i.e. separable completely metrizable space. In \cite{5} and \cite{6} N. V. Velichko and M. A. Patrakeev independently constructed a continuous open map from the Sorgenfrey line onto the real line. Velichko also proved that for each such map there exists a point with the preimage of cardinality continuum. In \cite{3} Patrakeev proved that continuous open metrizable images of the Sorgenfrey line are exactly Polish spaces. He also strengthened the result of Velichko by showing that for each continuous open map from the Sorgenfrey line onto the metrizable space there exists a point with the preimage of cardinality continuum.

Continuous open images of submetrizable spaces were studied by Svetlichnyi. In \cite{10} he proved that if a paracompact space is an image of a submetrizable space under continuous open compact map, then it is a submetrizable space. He also proved that there exists a nonmetrizable (hence not submetrizable) compact space that is a continuous open s-image of a submetrizable space. Since the Sorgenfrey line is a hereditarily separable submetrizable space it is natural to ask the following question.

\begin{question}
    Is there exists a Hausdorff nonmetrizable compact space that is a continuous open image of the Sorgenfrey line?
\end{question}

We give a description of Hausdorff open images of the Sorgenfrey line: these are precisely those spaces that have a Sorgenfrey base (see Theorem \ref{desc_op_im}). Using this description we prove that no compact space that contains a homeomorphic copy of the Sorgenfrey line is an open image of it (see Corollary \ref{ifcontsnotim}). This result answers Question \ref{PytkeevQues} negatively. But the following question remains open.

\begin{question}
    Is there exists a Hausdorff compact space that contains an uncountable subspace of the Sorgenfrey line and that is a continuous open image of the Sorgenfrey line?
\end{question}

\section{Notation and terminology}

We use terminology from \cite{1}, \cite{2}, and \cite{4}. Also we use the following notations:

\begin{ob} {\rm The symbol := means ``equals by definition''; the symbol $:\longleftrightarrow$ is used to show that the expression on the left side is an abbreviation for expression on the right side;
    \begin{itemize}
        \item $\omega := $ the set of finite ordinals = the set of natural numbers;
        \item $0 = \emptyset \in \omega$;
        \item $n = \{0, \dots, n - 1\}$ for all $n \in \omega$;
        \item $s$ is a {\it sequence} $\ :\longleftrightarrow\ $ $s$ is a function such that $\mathsf{dom}(s) \in \omega$ or $\mathsf{dom}(s) = \omega$;
        \item if $s$ is a sequence, then $\mathsf{lh}(s) := \mathsf{dom}(s)$;
        \item $\langle s_0, \dots, s_{n-1} \rangle :=$ the sequence $s$ such that $\mathsf{lh}(s) = n \in \omega$ and $s(i)=s_i$ for all $i \in n$;
        \item $\langle \rangle :=$ the sequence $q$ such that $\mathsf{lh}(q) = 0$;
        \item if $s = \langle s_0, \dots, s_{n-1} \rangle$, then $s\ \hat{}\ x := \langle s_0, \dots, s_{n-1}, x \rangle$;
        \item $f{\upharpoonright} A :=$ the restriction of function $f$ to $A$;
        \item if $s$ and $t$ are sequences, then $s \sqsubseteq t\ :\longleftrightarrow\ s = t{\upharpoonright} \mathsf{lh}(s)$;
        \item $s \sqsubset t :\longleftrightarrow\ s \sqsubseteq t$ and $s \not= t$;
        \item ${}^B A :=$ the set of functions from $B$ to $A$;
        \item ${}^{<\omega}A := \bigcup_{n \in \omega}{}^n A = $ the set of finite sequences in $A$.
    \end{itemize}}
\end{ob}

\begin{ob} {\rm Let $R$ be a binary relation on $X$ and $x, y \in X$. Then
\begin{itemize}
  \item $x{\uparrow}_R := \{z \in X: xRz\}$;
  \item $x{\downarrow}_R := \{z \in X: zRx\}$;
  \item $(x, y)_R := x{\uparrow}_R \cap y{\downarrow}_R$.
\end{itemize}}
\end{ob}

Now we introduce several relations on $\inftree$ and $\baireset$.

\begin{ob} {\rm Let $a, b \in \inftree \cup \baireset$. Then
    \begin{itemize}
        \item $a \triangleleft b :\longleftrightarrow \exists n \in \omega$ such that
            \begin{itemize}
                \item $a{\upharpoonright} n = b{\upharpoonright} n$ \quad and
                \item $a(n) < b(n)$.
            \end{itemize}
        \item $a \trianglelefteq b :\longleftrightarrow a \triangleleft b$ or $a = b$.
    \end{itemize}
}\end{ob}

\begin{ob} {\rm Let $\langle X, \tau \rangle$ be a topological space, $x \in X$, $B \subseteq X$, and $A \subseteq \mathbb{R}$. Then
    \begin{itemize}
        \item $\bairespace :=$ the Baire space of weight $\aleph_0 :=$ the countable power of the discrete space of cardinality $\aleph_0$;
        \item the double-arrow (two-arrow) space $:= \langle M, \tau_{\mathbb{A}} \rangle$, where $M = \{\langle x,0 \rangle:0<x\leq1\} \cup \{\langle x,1 \rangle:0\leq x < 1\}$ and $\tau_{\mathbb{A}}$ is the order topology induced by the lexicographic order on $M$ \cite[b-13 Special Spaces]{2};
        \item $\mathbb{S} :=$ the Sorgenfrey line $:= \langle \mathbb{R}, \tau_{\mathbb{S}} \rangle$, where $\tau_{\mathbb{S}}$ is the topology generated by $\{[a, b): a,b \in \mathbb{R}\}$;
        \item $A_{\mathbb{S}} :=$ the set $A$ as a subspace of $\mathbb{S}$;
        \item $A_{\mathbb{R}} :=$ the set $A$ as a subspace of $\langle \mathbb{R}, \tau_{\mathbb{R}} \rangle$, where $\tau_{\mathbb{R}}$ is the natural topology on the real line;
        \item if $y \in \mathbb{R}$, then $y < A :\longleftrightarrow y < z$ for all $z \in A$;
        \item if $p \in {}^{\omega} X$, then $p \xrightarrow{\langle X, \tau \rangle} x :\longleftrightarrow p$ converges to $x$ in $\langle X, \tau \rangle$;
        \item $\mathsf{nbhds}(x, \tau) := \{U \in \tau: x \in U\}$;
        \item $\tau {\upharpoonright} B := \{U \cap B: U \in \tau\} = $ the subspace topology of $B$;
        \item $\mathsf{Cl}_{\langle X, \tau \rangle}(B) := $ the closure of $B$ in $\langle X, \tau \rangle$;
        \item if $\langle Y, \sigma \rangle$ is a topological space, then $\langle X, \tau \rangle \cong \langle Y, \sigma \rangle :\longleftrightarrow \langle X, \tau \rangle$ is homeomorphic to $\langle Y, \sigma \rangle$.
    \end{itemize}
}\end{ob}

Recall that \cite{1} a Souslin scheme on a set $X$ is an indexed family ${\bf V} = \langle V_a \rangle_{a \in \inftree}$ of subsets of $X$.

\begin{definition}{\rm
    Let ${\bf V} = \langle V_a \rangle_{a \in \inftree}$ be a Souslin scheme on a set $X$ and $p \in \baireset$. Then

    \begin{itemize}
        \item $\mathsf{fruit}({\bf V}, p) := \bigcap_{n \in \omega}V_{p \upharpoonright n}$;
        \item ${\bf V}$ is {\it covering} $:\longleftrightarrow V_{\langle \rangle} = X$ and $V_a = \bigcup_{n \in \omega} V_{a\ \hat{}\ n}$ for all $a \in \inftree$;
        \item ${\bf V}$ is {\it complete} $:\longleftrightarrow \mathsf{fruit}({\bf V}, q) \not = \emptyset$ for all $q \in \baireset$;
        \item ${\bf V}$ has {\it strict branches} $:\longleftrightarrow |\mathsf{fruit}({\bf V}, q)| = 1$ for all $q \in \baireset$;
        \item ${\bf V}$ is {\it locally strict} $:\longleftrightarrow V_{a} = \bigcup_{n \in \omega} V_{a\ \hat{}\ n}$ and $V_{a\ \hat{}\ m} \cap V_{a\ \hat{}\ k} = \emptyset$ for all $a \in \inftree$ and $m \not = k \in \omega$.
    \end{itemize}}
\end{definition}

\begin{ob}  {\rm Let ${\bf V} = \langle V_a \rangle_{a \in \inftree}$ be a Souslin scheme on a set $X$, $x \in X$, $n \in \omega$, and $q \in \baireset$. Then
    \begin{itemize}
        \item $q$ is a {\it branch} of $x$ in ${\bf V} :\longleftrightarrow x \in \mathsf{fruit}({\bf V}, q)$;
        \item $\mathsf{branches}({\bf V}, x) := $ the set of branches of $x$ in ${\bf V}$;
        \item $\mathsf{rsubtree}(q, n) := \{s \in (q {\upharpoonright} n){\uparrow}_{\sqsubset} \cap \inftree: q \triangleleft s\}$;
        \item $\mathsf{rsequences}(q, n) := \{p \in \baireset : q \triangleleft p \land q{\upharpoonright} n = p{\upharpoonright} n\}$;
        \item $\mathsf{cut}({\bf V}, q, n) := \bigcup \{\mathsf{fruit}({\bf V}, p): p \in \mathsf{rsequences}(q, n)\}$.
    \end{itemize}}
\end{ob}

\begin{proposition} \label{Aqn}
Let ${\bf V} = \langle V_a \rangle_{a \in \inftree}$ be a covering Souslin scheme on a set $X$, let $n, m \in \omega$, $n \leq m$, and $q, p \in \baireset$. Then
    \begin{itemize}
        \item[(i)] $\mathsf{rsequences}(q, m) \subseteq \mathsf{rsequences}(q, n)$;
        \item[(ii)] if $p \trianglelefteq q$ and $p{\upharpoonright} n = q{\upharpoonright} n$, then $\mathsf{rsequences}(q, n) \subseteq \mathsf{rsequences}(p, n)$;
        \item[(iii)] $p \in \mathsf{rsequences}(q, n)$ iff $\exists k > n$ such that $p {\upharpoonright} k \in \mathsf{rsubtree}(q, n)$;
        \item[(iv)] $\mathsf{cut}({\bf V}, q, n) = \bigcup \{V_a: a \in \mathsf{rsubtree}(q, n)\} = \\
        = \{y \in V_{q \upharpoonright n}: \mathsf{branches}({\bf V}, y)\cap \mathsf{rsequences}(q, n) \not = \emptyset \}$. $\qed$
    \end{itemize}
\end{proposition}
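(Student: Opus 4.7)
The plan is to verify each of the four items by a careful unwinding of definitions; the only substantive ingredient is the covering property of $\mathbf{V}$, which is needed only in (iv). I expect no deep obstacle: the whole statement is a bookkeeping lemma, and the main thing is to keep track of the interaction between the lexicographic relation $\triangleleft$ and the prefix relation $\sqsubseteq$.

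For (i), if $p \in \mathsf{rsequences}(q,m)$ then $p{\upharpoonright}m = q{\upharpoonright}m$, and restricting both sides to $n \le m$ yields $p{\upharpoonright}n = q{\upharpoonright}n$; combined with $q \triangleleft p$, this gives $p \in \mathsf{rsequences}(q,n)$. For (ii), take $r \in \mathsf{rsequences}(q,n)$ and, assuming $p \triangleleft q$ (the case $p = q$ being immediate), let $k$ be the witness for $p \triangleleft q$ and $\ell$ the witness for $q \triangleleft r$. The hypothesis $p{\upharpoonright}n = q{\upharpoonright}n$ forces $k \ge n$. A short case split on $k < \ell$, $k = \ell$, $k > \ell$ produces a common witness for $p \triangleleft r$, and the prefix equality $p{\upharpoonright}n = r{\upharpoonright}n$ comes from the chain $p{\upharpoonright}n = q{\upharpoonright}n = r{\upharpoonright}n$.

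For (iii), in the forward direction I extract the witness $m$ for $q \triangleleft p$; because $q{\upharpoonright}n = p{\upharpoonright}n$, automatically $m \ge n$, so setting $k := m+1 > n$ makes $p{\upharpoonright}k$ extend $q{\upharpoonright}n$ properly and inherit the witness $m < k$ for $q \triangleleft p{\upharpoonright}k$. The reverse direction just reads the witness inside $p{\upharpoonright}k$ back as a witness for $q \triangleleft p$, and notes $q{\upharpoonright}n = (p{\upharpoonright}k){\upharpoonright}n = p{\upharpoonright}n$.

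For (iv) I first record the immediate consequence of covering that $V_b \subseteq V_c$ whenever $c \sqsubseteq b$. For the first equality, the $\subseteq$ inclusion uses (iii) to truncate a branch $p \in \mathsf{rsequences}(q,n)$ of $y$ to some $p{\upharpoonright}k \in \mathsf{rsubtree}(q,n)$ with $y \in V_{p{\upharpoonright}k}$. The $\supseteq$ inclusion is where covering does real work: given $y \in V_a$ with $a \in \mathsf{rsubtree}(q,n)$, I recursively extend $a$ one coordinate at a time, at each step choosing the index guaranteed by $V_b = \bigcup_m V_{b\,\hat{}\,m}$ that still contains $y$; this builds $p \in \baireset$ with $a \sqsubseteq p$ and $y \in \mathsf{fruit}(\mathbf{V},p)$, and the witness for $q \triangleleft a$ survives as a witness for $q \triangleleft p$. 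For the second equality, the $\subseteq$ direction uses $V_a \subseteq V_{q{\upharpoonright}n}$ together with the same extension argument to produce the required branch, while the $\supseteq$ direction applies (iii) directly to a branch $p \in \mathsf{branches}(\mathbf{V},y) \cap \mathsf{rsequences}(q,n)$ to land in some $V_{p{\upharpoonright}k}$ with $p{\upharpoonright}k \in \mathsf{rsubtree}(q,n)$.
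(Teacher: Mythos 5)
Your verification is correct: items (i)--(iii) follow from the definitions of $\triangleleft$, $\mathsf{rsubtree}$, and $\mathsf{rsequences}$ exactly as you argue, and in (iv) you correctly isolate the one place where the covering hypothesis does real work, namely the recursive extension of a finite $a \in \mathsf{rsubtree}(q,n)$ to an infinite branch through $y$ (together with the monotonicity $V_b \subseteq V_c$ for $c \sqsubseteq b$). The paper states this remark with no proof, treating it as routine bookkeeping, and your write-up is precisely the verification it leaves to the reader.
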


\begin{definition}{\rm Let ${\bf V} = \langle V_a \rangle_{a \in \inftree}$ be a Souslin scheme on a set $X$, $\tau$ a topology on $X$, $x \in X$, and
$q$ is a branch of $x$ in ${\bf V}$. Then
    \begin{itemize}
        \item $q$ is a {\it $\tau$-base branch} of $x$ in ${\bf V} :\longleftrightarrow \{\mathsf{cut}({\bf V}, q, m) \cup \{x\}: m \in \omega\}$
        is an open neighborhood base at the point $x$ in the space $\langle X, \tau \rangle$.
    \end{itemize}
}
\end{definition}

Because of this definition it is natural to introduce the following notation.

\begin{ob} {\rm Let ${\bf V} = \langle V_a \rangle_{a \in \inftree}$ be a Souslin scheme on a set $X$, $x \in X$, and $q \in \baireset$. Then
$\mathsf{cutBase}({\bf V}, q, x) := \{\mathsf{cut}({\bf V}, q, m) \cup \{x\}: m \in \omega\}$.
}
\end{ob}

\begin{proposition} \label{pntcorrect}
    Let ${\bf V} = \langle V_a \rangle_{a \in \inftree}$ be a Souslin scheme on $X$ and $\tau$ a Hausdorff topology on $X$.
    Then for any sequence $q \in \baireset$ there is at most one point $x \in X$ such that $q$ is a $\tau$-base branch of $x$ in
    ${\bf V}$. $\qed$
\end{proposition}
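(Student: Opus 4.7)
My plan is to prove the proposition by contradiction. Suppose two distinct points $x, y \in X$ both have $q$ as a $\tau$-base branch in ${\bf V}$, and use the Hausdorff separation of $x$ and $y$ to force the common shared portion of the two neighborhood bases to collapse.

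Concretely, pick disjoint open sets $U, W \in \tau$ with $x \in U$ and $y \in W$ by Hausdorffness. Applying the neighborhood-base property of $\mathsf{cutBase}({\bf V}, q, x)$ at $x$ produces an $m_1 \in \omega$ with $\mathsf{cut}({\bf V}, q, m_1) \cup \{x\} \subseteq U$, and symmetrically some $m_2 \in \omega$ with $\mathsf{cut}({\bf V}, q, m_2) \cup \{y\} \subseteq W$. Take $m := \max(m_1, m_2)$. Proposition \ref{Aqn}(i) gives $\mathsf{rsequences}(q, m) \subseteq \mathsf{rsequences}(q, m_i)$, whence by the defining union formula for $\mathsf{cut}$ one has $\mathsf{cut}({\bf V}, q, m) \subseteq \mathsf{cut}({\bf V}, q, m_i)$ for $i = 1, 2$. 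Hence both inclusions $\mathsf{cut}({\bf V}, q, m) \cup \{x\} \subseteq U$ and $\mathsf{cut}({\bf V}, q, m) \cup \{y\} \subseteq W$ hold simultaneously, forcing $\mathsf{cut}({\bf V}, q, m) \subseteq U \cap W = \emptyset$. This yields the desired contradiction against the coexistence of the two neighborhood bases $\mathsf{cutBase}({\bf V}, q, x)$ and $\mathsf{cutBase}({\bf V}, q, y)$ at the Hausdorff-separated points $x$ and $y$.

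The only step of genuine substance is the monotonicity of cuts in the second argument, which is an immediate consequence of Proposition \ref{Aqn}(i); the rest is routine Hausdorff bookkeeping against the unpacked definition of $\tau$-base branch. I do not anticipate any real obstacle.
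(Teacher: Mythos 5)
Your strategy---separate $x$ and $y$ by disjoint open sets $U$ and $W$, push both cut-bases inside them, and use the monotonicity $\mathsf{cut}({\bf V},q,m)\subseteq\mathsf{cut}({\bf V},q,m_i)$ coming from Remark \ref{Aqn}(i)---is the natural one (the paper states this remark without proof, so there is nothing to compare against), and everything up to the inclusion $\mathsf{cut}({\bf V},q,m)\subseteq U\cap W=\emptyset$ is correct. The gap is the very last sentence: an empty cut is \emph{not} by itself a contradiction. If $\mathsf{cut}({\bf V},q,m)=\emptyset$, then the families $\mathsf{cutBase}({\bf V},q,x)$ and $\mathsf{cutBase}({\bf V},q,y)$ contain the singletons $\{x\}$ and $\{y\}$, and nothing in the definition of a $\tau$-base branch forbids these from being open neighborhood bases at two distinct isolated points. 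In fact, for the hypotheses as literally stated the conclusion can fail: take $X=\{x,y\}$ discrete, let $q$ be the constant-zero sequence, and let $V_a=X$ when $a$ is a finite string of zeros and $V_a=\emptyset$ otherwise. This scheme is covering, $\mathsf{fruit}({\bf V},q)=X$, every $\mathsf{cut}({\bf V},q,n)$ is empty, and $q$ is a $\tau$-base branch of both $x$ and $y$.

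To close the argument you must rule out empty cuts, and for that you need a hypothesis the remark does not state but which holds wherever the paper actually applies it (namely for Sorgenfrey bases): that ${\bf V}$ is complete. Then $\mathsf{rsequences}(q,m)\neq\emptyset$ (for instance the sequence $p$ with $p{\upharpoonright}m=q{\upharpoonright}m$, $p(m)=q(m)+1$, and arbitrary further values belongs to it), and completeness makes each $\mathsf{fruit}({\bf V},p)$ nonempty, so $\mathsf{cut}({\bf V},q,m)\neq\emptyset$ and your inclusion into $U\cap W$ does yield the contradiction. So: right approach and correct bookkeeping, but the phrase ``this yields the desired contradiction'' conceals the one step that genuinely fails without an additional (implicit) hypothesis on ${\bf V}$.
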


\begin{ob} {\rm Let ${\bf V} = \langle V_a \rangle_{a \in \inftree}$ be a Souslin scheme on a set $X$, $\tau$ a topology on $X$, $x \in X$. Then
    \begin{itemize}
        \item $\mathsf{BB}({\bf V}, x, \tau)$ := the family of $\tau$-base branches of $x$ in ${\bf V}$;
        \item Let $q \in \baireset$ is a $\tau$-base branch of some point $y$ in ${\bf V}$. Then using Remark \ref{pntcorrect} we define
        $\mathsf{pnt}({\bf V}, q, \langle X, \tau \rangle) := $ the point $y \in X$ such that $q$ is a $\tau$-base branch of $y$ in ${\bf V}$.
    \end{itemize}}
\end{ob}

\begin{ob} {\rm\mbox{\ }
    \begin{itemize}
        \item ${\bf S} := $ the Souslin scheme $\langle S_a \rangle_{a \in \inftree}$ on $\baireset$ such that $S_a := \{p \in \baireset: a \sqsubseteq p\}$ for all $a \in \inftree$.
    \end{itemize}}
\end{ob}

\begin{proposition} \label{p_eq_branchp}
    $\mathsf{branches}({\bf S}, p) = \{p\}$ for all $p \in \baireset$. $\qed$
\end{proposition}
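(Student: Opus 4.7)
The plan is to unwind the definitions directly. By definition, $q \in \mathsf{branches}({\bf S}, p)$ iff $p \in \mathsf{fruit}({\bf S}, q) = \bigcap_{n \in \omega} S_{q \upharpoonright n}$, i.e. iff $q \upharpoonright n \sqsubseteq p$ for every $n \in \omega$ (using the definition $S_a = \{r \in \baireset : a \sqsubseteq r\}$).

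First I would show the inclusion $\{p\} \subseteq \mathsf{branches}({\bf S}, p)$: for $q = p$ we clearly have $p \upharpoonright n \sqsubseteq p$ for every $n$, so $p \in S_{p \upharpoonright n}$ for every $n$ and hence $p \in \mathsf{fruit}({\bf S}, p)$.

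For the reverse inclusion $\mathsf{branches}({\bf S}, p) \subseteq \{p\}$, assume $q \in \mathsf{branches}({\bf S}, p)$. Then $q \upharpoonright n \sqsubseteq p$ for every $n$, which by the definition of $\sqsubseteq$ means $p \upharpoonright n = (q \upharpoonright n) \upharpoonright n = q \upharpoonright n$ for every $n$. Thus $p(i) = q(i)$ for every $i \in \omega$, so $p = q$.

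No serious obstacle is expected; the statement is purely definitional once one observes that $S_a$ consists exactly of the extensions of $a$, so that ``$p$ passes through every initial segment of $q$'' forces $p$ and $q$ to agree coordinatewise.
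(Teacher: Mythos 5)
Your proof is correct and is exactly the definitional unwinding that the paper leaves implicit (the remark is stated with no proof, marked $\qed$ as immediate): $p \in \mathsf{fruit}({\bf S}, q)$ iff $q \upharpoonright n \sqsubseteq p$ for all $n$, which forces $q = p$. Both inclusions are handled cleanly; nothing is missing.
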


Let $\langle X, \tau \rangle$ be a topological space. Then a Souslin scheme ${\bf V} = \langle V_a \rangle_{a \in \inftree}$ on the set $X$ is called an
{\it open} Souslin scheme on the space $\langle X, \tau \rangle$ if $V_a \in \tau$ for all $a \in \inftree$.

\begin{definition} {\rm
Let $\langle X, \tau \rangle$ be a Hausdorff topological space. Then an open complete covering Souslin scheme ${\bf V} = \langle V_a \rangle_{a \in \inftree}$ on the space $\langle X, \tau \rangle$ is called a {\it Sorgenfrey base} for $\langle X, \tau \rangle$ if the following conditions hold:
\begin{itemize}
  \item[(S1)] $\forall x \in X\ \forall q \in \mathsf{branches}({\bf V}, x)\ \forall n \in \omega\\ \exists t \in \mathsf{BB}({\bf V}, x, \tau)\
  (t {\upharpoonright} n = q {\upharpoonright} n )$;
  \item[(S2)] $\forall q \in \baireset\ \exists z \in X (q \in \mathsf{BB}({\bf V}, z, \tau))$.
\end{itemize}
}
\end{definition}

\begin{proposition}
    Let ${\bf V} = \langle V_a \rangle_{a \in \inftree}$ be a Sorgenfrey base for a Hausdorff topological space $\langle X, \tau \rangle$. Then $\mathsf{branches}({\bf V}, x) \subseteq \mathsf{Cl}_{\bairespace}(\mathsf{BB}({\bf V}, x, \tau))$ for all $x \in X$. $\qed$
\end{proposition}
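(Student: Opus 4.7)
The plan is to unfold the definition of closure in the Baire space and match it with axiom (S1). Fix $x \in X$ and $q \in \mathsf{branches}(\mathbf{V}, x)$. I want to show $q \in \mathsf{Cl}_{\bairespace}(\mathsf{BB}(\mathbf{V}, x, \tau))$, which by the standard product-topology description of $\bairespace$ amounts to showing that every basic neighborhood of $q$ meets $\mathsf{BB}(\mathbf{V}, x, \tau)$.

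The basic open neighborhoods of $q$ in $\bairespace$ are exactly the cylinders $[q{\upharpoonright}n] := \{p \in \bairespace : p{\upharpoonright}n = q{\upharpoonright}n\}$ for $n \in \omega$. So I would reduce the statement to: for every $n \in \omega$ there exists $t \in \mathsf{BB}(\mathbf{V}, x, \tau)$ with $t{\upharpoonright}n = q{\upharpoonright}n$. But this is literally what condition (S1) in the definition of a Sorgenfrey base asserts, applied to the given $x$, $q \in \mathsf{branches}(\mathbf{V}, x)$ and $n$.

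Consequently the proof is essentially a one-line invocation of (S1); the only content is the translation between the two languages, namely between ``belongs to the closure of $\mathsf{BB}(\mathbf{V}, x, \tau)$ in $\bairespace$'' and ``every initial segment of $q$ extends to an element of $\mathsf{BB}(\mathbf{V}, x, \tau)$''. There is no real obstacle here; the only point worth a brief comment is that since the topology on $\bairespace$ is first countable (in fact metrizable), membership in the closure can be checked either via basic neighborhoods or, equivalently, via sequences converging to $q$, but the neighborhood formulation maps most directly onto (S1).
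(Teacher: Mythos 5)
Your proof is correct and is precisely the intended argument: the paper states this remark with no proof (the $\qed$ follows the statement), and the only content is the translation you give between closure in $\bairespace$ via cylinder neighborhoods $\{p : p{\upharpoonright}n = q{\upharpoonright}n\}$ and condition (S1). Nothing is missing.
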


\section{Description of open images of the Sorgenfrey line}
The main result of this section is the following theorem.

\begin{theorem} \label{desc_op_im}
A Hausdorff space is a continuous open image of the Sorgenfrey line iff there exists a Sorgenfrey base for this space.
\end{theorem}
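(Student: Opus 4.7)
The plan is to prove both directions by routing through a canonical Sorgenfrey base $\mathbf{U}$ on $\mathbb{S}$ itself, which then serves as a template for transferring structure between $\mathbb{S}$ and an arbitrary Hausdorff space. To build $\mathbf{U} = \langle U_a \rangle_{a \in \inftree}$, I would set $U_{\langle\rangle} := \mathbb{R}$, fix some enumeration $\{U_n\}_{n \in \omega}$ of the partition $\{[n, n{+}1) : n \in \mathbb{Z}\}$ of $\mathbb{R}$, and for every $a$ of length $\geq 1$ with $U_a = [\alpha, \beta)$ pick an increasing sequence $\alpha = c_0 < c_1 < \cdots$ with $\sup_k c_k = \beta$ and $c_{k+1} - c_k \to 0$, and set $U_{a\ \hat{}\ k} := [c_k, c_{k+1})$. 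This $\mathbf{U}$ is an open, locally strict, covering, complete Souslin scheme whose fruits are singletons, and $q \mapsto x_q := \mathsf{fruit}(\mathbf{U}, q)$ is a bijection $\baireset \to \mathbb{R}$. A direct computation using Remark \ref{Aqn}(iv) together with the left-to-right enumeration of sub-intervals at depth $\geq 1$ yields $\mathsf{cut}(\mathbf{U}, q, m) \cup \{x_q\} = [x_q, \beta_{q \upharpoonright m})$ for $m \geq 1$ (the $m = 0$ set is also open in $\mathbb{S}$), an open right-neighbourhood that shrinks to $\{x_q\}$; this gives (S2), while (S1) is immediate since each $x$ has a unique $\mathbf{U}$-branch.

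For the ``$\Leftarrow$'' direction, given a Sorgenfrey base $\mathbf{V}$ on a Hausdorff space $\langle X, \tau \rangle$, define $f \colon \mathbb{S} \to X$ by letting $f(x)$ be the unique point of $X$ having the $\mathbf{U}$-branch $q_x$ of $x$ as a $\tau$-base branch in $\mathbf{V}$; well-definedness follows from (S2) for $\mathbf{V}$ and Remark \ref{pntcorrect}. Continuity, openness, and surjectivity of $f$ all flow from the identity
\[
 f[\mathsf{cut}(\mathbf{U}, q_x, m) \cup \{x\}] = \mathsf{cut}(\mathbf{V}, q_x, m) \cup \{f(x)\}.
\]
The ``$\subseteq$'' inclusion is immediate from Remark \ref{Aqn}(iv) and, combined with the right-hand side forming an open neighbourhood base at $f(x)$ (since $q_x$ is a base branch of $f(x)$), gives continuity. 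The ``$\supseteq$'' inclusion, which gives openness, is the main technical point: for $z \in \mathsf{cut}(\mathbf{V}, q_x, m)$ pick a $\mathbf{V}$-branch $p$ of $z$ with $p \in \mathsf{rsequences}(q_x, m)$, let $k \geq m$ be the first index where $p(k) > q_x(k)$, apply (S1) with $n = k+1$ to obtain a base branch $t$ of $z$ with $t \upharpoonright (k+1) = p \upharpoonright (k+1)$ (so that $t \in \mathsf{rsequences}(q_x, m)$), and take $x'$ to be the point of $\mathbb{S}$ whose $\mathbf{U}$-branch is $t$. Surjectivity follows because every $y \in X$ has a $\mathbf{V}$-branch (covering), hence by (S1) a base branch $q$, so $y = f(x_q)$.

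For the ``$\Rightarrow$'' direction, given a continuous open surjection $f \colon \mathbb{S} \to \langle X, \tau \rangle$, set $V_a := f[U_a]$. Openness, covering, and completeness of $\mathbf{V}$ are immediate from the corresponding properties of $\mathbf{U}$ and the hypotheses on $f$ (for completeness, $f(x_q) \in \bigcap_n f[U_{q \upharpoonright n}]$). For (S2), the image identity $f[\mathsf{cut}(\mathbf{U}, q, m) \cup \{x_q\}] = \mathsf{cut}(\mathbf{V}, q, m) \cup \{f(x_q)\}$ is open in $\langle X, \tau \rangle$ (by openness of $f$) and forms an open neighbourhood base at $f(x_q)$ (by continuity of $f$), so $q$ is a $\tau$-base branch of $f(x_q)$. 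For (S1), given $y \in X$ and $q \in \mathsf{branches}(\mathbf{V}, y)$, pick $x \in f^{-1}(y) \cap U_{q \upharpoonright n}$, nonempty since $y \in V_{q \upharpoonright n} = f[U_{q \upharpoonright n}]$; local strictness of $\mathbf{U}$ gives $q_x \upharpoonright n = q \upharpoonright n$, and the same open-map argument shows $q_x$ is a $\tau$-base branch of $y$.

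The main obstacle I expect is the openness step in the ``$\Leftarrow$'' direction, where one must upgrade a mere $\mathbf{V}$-branch witnessing cut-membership of $z$ into a base branch witnessing the \emph{same} rightward jump via (S1), applied with $n$ taken strictly past the first divergence index. Everything else reduces to bookkeeping around $\mathsf{cut}$, $\mathsf{rsequences}$, and Remark \ref{Aqn}(iv).
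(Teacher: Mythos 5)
Your proposal is correct and takes essentially the same route as the paper: your canonical interval scheme $\mathbf{U}$ is exactly the paper's $\mathbf{V}^{\mathbb{S}}$, which the paper uses (via the auxiliary model $\langle \baireset, \sigma_{\mathbb{S}} \rangle$ and Corollary \ref{st_vid_S}) in place of your direct bijection $q \mapsto x_q$. Both directions, including the image identity for cuts and the use of (S1) just past the first divergence index in the openness step, coincide with the paper's Lemmas \ref{image_has_SB} and \ref{space_with_SB_image}.
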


\begin{proof}
    The theorem follows from Corollary \ref{st_vid_S}, Lemma \ref{image_has_SB}, and Lemma \ref{space_with_SB_image}.
\end{proof}

\begin{lemma} \label{predststr}
    The family $\bigcup \{\mathsf{cutBase}({\bf S}, p, p) : p \in \baireset\}$ is a base for a topology on $\baireset$ and for any point $x \in \baireset$ the family
    $\mathsf{cutBase}({\bf S}, x, x)$ is a neighborhood base at the point x in this topology.
\end{lemma}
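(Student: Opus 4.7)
The plan is to unpack the definitions so that each set $\mathsf{cut}({\bf S}, q, n) \cup \{q\}$ becomes an explicit subset of $\baireset$, then verify the two standard conditions for a base: the family covers $\baireset$, and for any point lying in a finite intersection of basic sets, we can find a basic set sandwiched between the point and the intersection. Since by Remark \ref{p_eq_branchp} we have $\mathsf{fruit}({\bf S}, p) = \{p\}$, the definition of $\mathsf{cut}$ gives
\[
\mathsf{cut}({\bf S}, q, n) = \mathsf{rsequences}(q, n) = \{p \in \baireset : q \triangleleft p \text{ and } p{\upharpoonright}n = q{\upharpoonright}n\},
\]
so $\mathsf{cut}({\bf S}, q, n) \cup \{q\}$ is just the set of $p$ with $p{\upharpoonright}n = q{\upharpoonright}n$ and $q \trianglelefteq p$. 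Covering is then immediate: every $p \in \baireset$ lies in $\mathsf{cut}({\bf S}, p, 0) \cup \{p\}$.

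For the intersection property, the crucial claim is the following: \emph{if $x \in \mathsf{cut}({\bf S}, q, n) \cup \{q\}$, then for every $k \geq n$,}
\[
\mathsf{cut}({\bf S}, x, k) \cup \{x\} \subseteq \mathsf{cut}({\bf S}, q, n) \cup \{q\}.
\]
To prove this I would split into two cases. If $x = q$, then the inclusion $\mathsf{cut}({\bf S}, q, k) \subseteq \mathsf{cut}({\bf S}, q, n)$ is given by Remark \ref{Aqn}(i). If $x \neq q$, then $x \in \mathsf{cut}({\bf S}, q, n)$, so $q \triangleleft x$ and $q{\upharpoonright}n = x{\upharpoonright}n$; applying Remark \ref{Aqn}(ii) (with the roles of $p$ and $q$ there as $q$ and $x$ here) yields $\mathsf{rsequences}(x, n) \subseteq \mathsf{rsequences}(q, n)$, i.e., $\mathsf{cut}({\bf S}, x, n) \subseteq \mathsf{cut}({\bf S}, q, n)$, and another appeal to Remark \ref{Aqn}(i) shrinks the left side further to $\mathsf{cut}({\bf S}, x, k)$.

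From this claim the two assertions of the lemma follow at once. Given two basic sets $\mathsf{cut}({\bf S}, q_i, n_i) \cup \{q_i\}$ ($i = 1, 2$) and a common point $x$, choose $k := \max(n_1, n_2)$; then $\mathsf{cut}({\bf S}, x, k) \cup \{x\}$ is a basic set from $\mathsf{cutBase}({\bf S}, x, x)$ contained in both, establishing the base property. The same computation shows that once we fix $x$, every basic set containing $x$ contains some element of $\mathsf{cutBase}({\bf S}, x, x)$, so that family is a neighborhood base at $x$.

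The only mildly delicate point is the bookkeeping around the relation $\triangleleft$ when juggling several sequences at once; but this is absorbed entirely by citing Remark \ref{Aqn}(i) and (ii) in the right order, and no additional combinatorics on $\inftree$ is needed.
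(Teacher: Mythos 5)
Your proof is correct and follows essentially the same route as the paper's: both reduce $\mathsf{cut}({\bf S}, q, n)$ to $\mathsf{rsequences}(q, n)$ via Remark \ref{p_eq_branchp} and then use parts (i) and (ii) of Remark \ref{Aqn} to show that $\mathsf{cut}({\bf S}, x, \max(n_1,n_2)) \cup \{x\}$ sits inside the intersection of two basic sets containing $x$. Your write-up is somewhat more explicit about the covering condition and the case split $x = q$ versus $x \neq q$, but the substance is identical.
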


\begin{proof}
    Let $p, q \in \baireset$, let $n, m \in \omega$, and let $x \in (\mathsf{cut}({\bf S}, p, n)\cup\{p\}) \cap (\mathsf{cut}({\bf S}, q, m)\cup\{q\})$. We now prove that
    there exists a set $U \in \mathsf{cutBase}({\bf S}, x, x)$ such that

    \begin{equation}
        x \in U \subseteq (\mathsf{cut}({\bf S}, p, n)\cup\{p\}) \cap (\mathsf{cut}({\bf S}, q, m)\cup\{q\}).
    \end{equation}

    Without loss of generality, we can assume that $n \leq m$. Consider the set $U := \mathsf{cut}({\bf S}, x, m) \cup \{x\}$.
    Let $z \in U \setminus \{x\}$. Then using (i), (ii) of Remark \ref{Aqn} and Remark \ref{p_eq_branchp}, we get $z \in \mathsf{rsequences}(x, m) \subseteq
    \mathsf{rsequences}(p, n) \cap \mathsf{rsequences}(q, m)$. Finally, from (i), (ii) of Remark \ref{Aqn} and Remark \ref{p_eq_branchp} it follows that $z \in (\mathsf{cut}({\bf S}, p, n)
    \cup\{p\}) \cap (\mathsf{cut}({\bf S}, q, m)\cup\{q\})$.
\end{proof}

\begin{ob}
    $\sigma_{\mathbb{S}} :=$ the topology on $\baireset$ that is constructed in Lemma \ref{predststr}.
\end{ob}

\begin{proposition}
    ${\bf S}$ is an open complete covering Souslin scheme on the space $\langle \baireset, \sigma_{\mathbb{S}} \rangle$. $\qed$
\end{proposition}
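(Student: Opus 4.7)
I would prove the three properties separately; openness is the one that needs real argument, the other two are direct.

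\textbf{Covering and completeness.} Unwinding the definition of $\mathbf{S}$: $S_{\langle\rangle}=\{p:\langle\rangle\sqsubseteq p\}=\baireset$, and for any $a\in\inftree$ every extension $p$ of $a$ has some value $p(\mathsf{lh}(a))=n$ and then $a\,\hat{}\,n\sqsubseteq p$, so $S_a=\bigcup_{n\in\omega}S_{a\hat{}n}$. For completeness, for each $q\in\baireset$ and each $n$ we have $q\upharpoonright n\sqsubseteq q$, so $q\in S_{q\upharpoonright n}$; therefore $q\in\mathsf{fruit}(\mathbf{S},q)$, which is consequently nonempty (this is in fact the same observation as Proposition \ref{p_eq_branchp}, which says that this intersection equals $\{q\}$).

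\textbf{Openness.} Fix $a\in\inftree$, set $n:=\mathsf{lh}(a)$, and pick $p\in S_a$, so $p\upharpoonright n=a$. By Lemma \ref{predststr}, $\mathsf{cutBase}(\mathbf{S},p,p)$ is a neighborhood base at $p$ in $\sigma_{\mathbb{S}}$, so it is enough to exhibit one of its members inside $S_a$. The obvious candidate is
\[ U:=\mathsf{cut}(\mathbf{S},p,n)\cup\{p\}. \]
Using Proposition \ref{p_eq_branchp} we have $\mathsf{fruit}(\mathbf{S},r)=\{r\}$ for every $r$, so by the definition of $\mathsf{cut}$,
\[ \mathsf{cut}(\mathbf{S},p,n)=\mathsf{rsequences}(p,n)=\{r\in\baireset:p\triangleleft r\text{ and }r\upharpoonright n=p\upharpoonright n\}. \]
For any such $r$, $r\upharpoonright n=p\upharpoonright n=a$, hence $a\sqsubseteq r$ and $r\in S_a$; combined with $p\in S_a$, this gives $U\subseteq S_a$. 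Thus every point of $S_a$ has a basic $\sigma_{\mathbb{S}}$-neighborhood contained in $S_a$, which proves $S_a\in\sigma_{\mathbb{S}}$.

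\textbf{Where the difficulty sits.} There is no real obstacle: the three clauses reduce to short definitional unwinding once one notices that for the particular scheme $\mathbf{S}$ each fruit is a singleton, so the two equivalent descriptions of $\mathsf{cut}$ in Remark \ref{Aqn}(iv) collapse to the simple set $\mathsf{rsequences}(p,n)$. The only thing to keep straight is the indexing: the length of $a$ dictates exactly which level $n$ of the cut one must take in order to land inside $S_a$, and once $n=\mathsf{lh}(a)$ is chosen, the condition $r\upharpoonright n=p\upharpoonright n$ automatically forces $a\sqsubseteq r$.
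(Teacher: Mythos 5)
Your proof is correct, and since the paper states this remark with no proof at all (it is left as an evident consequence of Lemma \ref{predststr} and Remark \ref{p_eq_branchp}), you are simply supplying the intended routine verification: covering and completeness are definitional, and openness of $S_a$ follows because for each $p\in S_a$ the basic neighborhood $\mathsf{cut}({\bf S},p,\mathsf{lh}(a))\cup\{p\}$ collapses, via $\mathsf{fruit}({\bf S},r)=\{r\}$, to a set of sequences agreeing with $p$ (hence with $a$) on the first $\mathsf{lh}(a)$ coordinates. No gaps; this matches what the paper tacitly relies on.
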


\begin{lemma} \label{image_has_SB}
    Let $\langle Y, \tau \rangle$ be a Hausdorff space and let $f: \langle \baireset, \sigma_{\mathbb{S}} \rangle \rightarrow \langle Y, \tau \rangle$ be an
    open continuous surjection. Denote $f[S_a]$ by $V_a$ for all $a \in \inftree$.
    Then ${\bf V} := \langle V_a \rangle_{a \in \inftree}$ is a Sorgenfrey base for $\langle Y, \tau \rangle$.
\end{lemma}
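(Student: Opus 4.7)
The plan is to verify directly each condition defining a Sorgenfrey base. The central observation, from which both (S1) and (S2) will follow with almost no work, is the following claim: \emph{for every $p \in \baireset$ the sequence $p$ itself is a $\tau$-base branch of $f(p)$ in ${\bf V}$.} Everything else is routine bookkeeping, and all computations with $\mathsf{cut}$ will go through item (iv) of Remark~\ref{Aqn}, which rewrites cuts as unions of scheme elements and is therefore compatible with applying $f$.

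First I would handle the structural axioms. Each $V_a = f[S_a]$ is $\tau$-open because $S_a$ is $\sigma_{\mathbb{S}}$-open and $f$ is open. The covering condition follows from surjectivity (for $V_{\langle\rangle} = Y$) and from $S_a = \bigcup_n S_{a\ \hat{}\ n}$ together with the fact that images commute with unions. Completeness is immediate: $p \in S_{p \upharpoonright n}$ for every $n$ implies $f(p) \in V_{p \upharpoonright n}$ for every $n$, so $f(p) \in \mathsf{fruit}({\bf V}, p)$.

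Next I would prove the key claim. That $p$ is a branch of $f(p)$ in ${\bf V}$ has just been noted. For the neighborhood-base property I would establish the identity
\[
f[\mathsf{cut}({\bf S}, p, m) \cup \{p\}] \;=\; \mathsf{cut}({\bf V}, p, m) \cup \{f(p)\}
\]
by applying Remark~\ref{Aqn}(iv) to both schemes: each cut equals $\bigcup\{V_a : a \in \mathsf{rsubtree}(p,m)\}$ (respectively, $\bigcup\{S_a : a \in \mathsf{rsubtree}(p,m)\}$), and $f$ commutes with such unions. With this identity in hand, Lemma~\ref{predststr} guarantees that $\{\mathsf{cut}({\bf S}, p, m) \cup \{p\} : m \in \omega\}$ is a neighborhood base at $p$ in $\sigma_{\mathbb{S}}$, so openness of $f$ gives that each $\mathsf{cut}({\bf V}, p, m) \cup \{f(p)\}$ is $\tau$-open, while continuity of $f$ applied to an arbitrary $W \in \mathsf{nbhds}(f(p), \tau)$ produces an $m$ with $\mathsf{cut}({\bf S}, p, m) \cup \{p\} \subseteq f^{-1}[W]$, whence $\mathsf{cut}({\bf V}, p, m) \cup \{f(p)\} \subseteq W$.

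Finally, (S1) and (S2) drop out. For (S2) I take $z := f(q)$ and invoke the key claim. For (S1), given $x$, a branch $q$ of $x$, and $n$, the inclusion $x \in V_{q \upharpoonright n} = f[S_{q \upharpoonright n}]$ yields a preimage $t \in S_{q \upharpoonright n}$ of $x$, which automatically satisfies $t \upharpoonright n = q \upharpoonright n$ and, by the key claim, lies in $\mathsf{BB}({\bf V}, x, \tau)$. I do not expect a serious obstacle; the only computation to be careful with is the image identity for the cuts, where the natural temptation is to route through $\mathsf{fruit}$ (which does not commute with $f$) rather than through Remark~\ref{Aqn}(iv).
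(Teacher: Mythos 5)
Your proposal is correct and follows essentially the same route as the paper's proof: the ``key claim'' that every $p\in\baireset$ lies in $\mathsf{BB}({\bf V},f(p),\tau)$ is exactly the paper's central step, established via the same image identity $f[\mathsf{cut}({\bf S},p,m)\cup\{p\}]=\mathsf{cut}({\bf V},p,m)\cup\{f(p)\}$ through Remark~\ref{Aqn}(iv), and (S1), (S2) are then derived in the same way. Your explicit remark that continuity (not just openness) is needed for the neighborhood-base direction is a point the paper leaves implicit, but the argument is the same.
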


\begin{proof}
    Since $f$ is an open surjection and ${\bf S}$ is an open complete covering Souslin scheme, we see that ${\bf V}$ is an open complete covering Souslin scheme on the space
    $\langle Y, \tau \rangle$.

    Let us prove that

    \begin{equation}\label{seq_in_seqs}
        x \in \mathsf{branches}({\bf V}, y) \qquad \mbox{ for all $y \in Y$ and $x \in f^{-1}(y)$}.
    \end{equation}

    Let $y \in Y$ and $x \in f^{-1}(y)$. Then for any $n \in \omega$ it follows that $y = f(x) \in f[S_{x \upharpoonright n}] = V_{x \upharpoonright n}$,
    and hence $x \in \mathsf{branches}({\bf V}, y)$.

    We now prove that

    \begin{equation}\label{q_x_in_SBB}
         x \in \mathsf{BB}({\bf V}, y, \tau)  \qquad \mbox{ for all $y \in Y$ and $x \in f^{-1}(y)$}.
    \end{equation}

    Let $y \in Y$ and $x \in f^{-1}(y)$. Since $\mathsf{cutBase}({\bf S}, x, x)$ is an open neighborhood base at the poin $x$ and $f$ is an open map,
    it is enough to prove that $\mathsf{cut}({\bf V}, x, n) \cup \{y\} = f[\mathsf{cut}({\bf S}, x, n) \cup \{x\}]$ for all $n \in \omega$.

    Fix $n \in \omega$. Using (iv) of Remark \ref{Aqn}, we get
    \[
    \begin{split}
        &f[\mathsf{cut}({\bf S}, x, n) \cup \{x\}] =\\
        &f[{\textstyle \bigcup} \{S_a: a \in \mathsf{rsubtree}(x, n)\}] \cup \{y\} =\\
        &{\textstyle \bigcup} \{f[S_a]: a \in \mathsf{rsubtree}(x, n)\} \cup \{y\} =\\
        &{\textstyle \bigcup} \{V_a : a \in \mathsf{rsubtree}(x, n)\} \cup \{y\} =\\
        &\mathsf{cut}({\bf V}, x, n) \cup \{y\}.
    \end{split}
    \]
    Since $f$ is a surjection, it follows from (\ref{q_x_in_SBB}) that ${\bf V}$ satisfies (S2).

    We now prove that ${\bf V}$ satisfies (S1). Let $y \in Y$, $q \in \mathsf{branches}({\bf V}, y)$, and $n \in \omega$. Then $y \in V_{q \upharpoonright n} = f[S_{q \upharpoonright n}]$. This means that there exists $x \in S_{q \upharpoonright n}$ such that $f(x) = y$, so it follows from (\ref{q_x_in_SBB}) that $x \in \mathsf{BB}({\bf V}, y, \tau)$. Also since $x \in S_{q \upharpoonright n}$, we see that $x {\upharpoonright} n = q {\upharpoonright} n$.
\end{proof}

\begin{lemma} \label{space_with_SB_image}
    Let $\langle Y, \tau \rangle$ be a Hausdorff space with a Sorgenfrey base. Then there exists a continuous open surjection $f: \langle \baireset, \sigma_{\mathbb{S}} \rangle \rightarrow \langle Y, \tau \rangle$. Moreover, if $\langle Y, \tau \rangle$ has a locally strict Sorgenfrey base with strict branches, then $\langle Y, \tau \rangle$ is homeomorphic to $\langle \baireset, \sigma_{\mathbb{S}} \rangle$.
\end{lemma}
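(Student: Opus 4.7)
The plan is to define $f : \baireset \to Y$ by $f(q) := \mathsf{pnt}({\bf V}, q, \langle Y, \tau \rangle)$, where ${\bf V}$ is the given Sorgenfrey base. Axiom (S2) supplies, for every $q \in \baireset$, some point having $q$ as a $\tau$-base branch, and Proposition \ref{pntcorrect} (invoking Hausdorffness) makes that point unique, so $f$ is well-defined. For surjectivity, I fix $y \in Y$ and use the covering identity $V_a = \bigcup_{n} V_{a\ \hat{}\ n}$ to recursively build a branch $q \in \mathsf{branches}({\bf V}, y)$; applying (S1) with $n = 0$ then upgrades $q$ to some $t \in \mathsf{BB}({\bf V}, y, \tau)$, giving $f(t) = y$.

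Continuity should follow from a basic-neighborhood computation. Fix $q \in \baireset$ and write $y := f(q)$; since $q \in \mathsf{BB}({\bf V}, y, \tau)$, the family $\mathsf{cutBase}({\bf V}, q, y)$ is a $\tau$-neighborhood base at $y$. Given a basic neighborhood $\mathsf{cut}({\bf V}, q, m) \cup \{y\}$, I would show that the basic $\sigma_{\mathbb{S}}$-neighborhood $\mathsf{cut}({\bf S}, q, m) \cup \{q\}$ of $q$ maps into it: by Remark \ref{p_eq_branchp}, each $p \in \mathsf{cut}({\bf S}, q, m)$ lies in $\mathsf{rsequences}(q, m)$, and since $p$ is a branch of $f(p)$ in ${\bf V}$, part (iv) of Remark \ref{Aqn} places $f(p)$ in $\mathsf{cut}({\bf V}, q, m)$.

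The main obstacle will be openness. I plan to sharpen the previous computation to the equality
\[
f[\mathsf{cut}({\bf S}, q, m) \cup \{q\}] = \mathsf{cut}({\bf V}, q, m) \cup \{y\},
\]
whose right-hand side is $\tau$-open by the definition of $\tau$-base branch. The inclusion $\subseteq$ recycles the continuity argument; for $\supseteq$, given $z \in \mathsf{cut}({\bf V}, q, m)$, part (iv) of Remark \ref{Aqn} produces some $r \in \mathsf{branches}({\bf V}, z) \cap \mathsf{rsequences}(q, m)$. Letting $i \geq m$ witness $q \triangleleft r$ and applying (S1) at level $n = i + 1$ should yield $t \in \mathsf{BB}({\bf V}, z, \tau)$ with $t \upharpoonright (i+1) = r \upharpoonright (i+1)$; this $t$ still lies in $\mathsf{rsequences}(q, m)$ (since $t \upharpoonright m = q \upharpoonright m$ and the same index $i$ witnesses $q \triangleleft t$) and satisfies $f(t) = z$. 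The delicate point is exactly this replacement: trading an arbitrary branch for a $\tau$-base branch without disturbing the initial segment that keeps it inside the cut.

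For the ``moreover'' clause, only injectivity remains, and local strictness is designed for it: if $f(p) = f(q) = y$ with $p \neq q$, taking $n$ to be the least coordinate of disagreement places $y$ in $V_{(p \upharpoonright n)\ \hat{}\ p(n)} \cap V_{(p \upharpoonright n)\ \hat{}\ q(n)}$, contradicting disjointness of siblings. Combined with the first part, $f$ is then a continuous open bijection, hence a homeomorphism.
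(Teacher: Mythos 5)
Your proposal is correct and follows essentially the same route as the paper: define $f(q) := \mathsf{pnt}({\bf V}, q, \langle Y, \tau\rangle)$, prove the key identity $f[\mathsf{cut}({\bf S}, q, m) \cup \{q\}] = \mathsf{cut}({\bf V}, q, m) \cup \{f(q)\}$ via part (iv) of Remark \ref{Aqn} (with exactly the same (S1)-upgrade of a branch to a base branch at level $i+1$ for the reverse inclusion), and get injectivity in the locally strict case from disjointness of siblings. The only cosmetic differences are that you derive surjectivity from the covering property plus (S1) rather than citing (S2), and you argue injectivity at the first coordinate of disagreement instead of noting that $\mathsf{branches}({\bf V}, x)$ is a singleton; both variants are fine.
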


\begin{proof}
    Let ${\bf V} = \langle V_a \rangle_{a \in \inftree}$ be a Sorgenfrey base for $\langle Y, \tau \rangle$.

    Let $f$ be the map from $\baireset$ to $Y$ such that $f(p) := \mathsf{pnt}({\bf V}, p, \langle Y, \tau \rangle)$ for all $p \in \baireset$. From property (S2) of ${\bf V}$ it follows that $f$ is a surjection. Let us prove that $f: \langle \baireset, \sigma_{\mathbb{S}} \rangle \rightarrow \langle Y, \tau \rangle$ is continuous and open.

    Let $p \in \baireset$. Since $p \in \mathsf{BB}({\bf V}, f(p), \tau)$, we see that $\mathsf{cutBase}({\bf V}, p, f(p))$ is an open neighborhood base at the point $f(p)$ in $\langle Y, \tau \rangle$. Hence we must only prove that

    \begin{equation}
        f[\mathsf{cut}({\bf S}, p, n) \cup \{p\}] = \mathsf{cut}({\bf V}, p, n) \cup \{f(p)\} \mbox{ for all $n \in \omega$}.
    \end{equation}

    Let $n \in \omega$. We will prove two inclusions.

    \begin{itemize}
        \item[$"\subseteq"$] Let $q \in \mathsf{cut}({\bf S}, p, n)$. From the definition of $f$ it follows that $q \in \mathsf{branches}({\bf V}, f(q))$. From Remark \ref{p_eq_branchp} and (iv) of Remark \ref{Aqn} it follows that $q {\upharpoonright} n = p {\upharpoonright} n$ and $p \triangleleft q$. Finally, $f(q) \in \mathsf{cut}({\bf V}, p, n)$.
        \item[$"\supseteq"$] Let $x \in \mathsf{cut}({\bf V}, p, n)$. From (iv) of Remark \ref{Aqn} it follows that there exists $q \in \mathsf{branches}({\bf V}, x)$ such that $q {\upharpoonright} n = p {\upharpoonright} n$ and $p \triangleleft q$. Choose $m \in \omega$ such that $q {\upharpoonright} m = p {\upharpoonright} m$ and $p {\upharpoonright} (m+1) \triangleleft q {\upharpoonright} (m+1)$. From property (S1) of ${\bf V}$ it follows that there exists $t \in \mathsf{BB}({\bf V}, x, \tau)$ such that $t {\upharpoonright} (m+1) = q {\upharpoonright} (m+1)$. Hence $t {\upharpoonright} n = q {\upharpoonright} n = p {\upharpoonright} n$ and $p \triangleleft t$, so $t \in \mathsf{cut}({\bf S}, p, n)$ by (iv) of Remark \ref{Aqn}. Since $t \in \mathsf{BB}({\bf V}, x, \tau)$, we see that $f(t) = x$.
    \end{itemize}

    Now suppose that ${\bf V}$ is locally strict and has strict branches. Then for any $x \in Y$ the set
    $\mathsf{branches}({\bf V}, x)$ is a singleton, therefore $f$ is a bijection because
    $f^{-1}(x) \subseteq \mathsf{BB}({\bf V}, x, \tau) \subseteq \mathsf{branches}({\bf V}, x)$.
\end{proof}

Now we show that the Sorgenfrey line has a locally strict Sorgenfrey base with strict branches. To be precise, we show that a Lusin $\pi$-base for the Sorgenfrey line that was constructed in \cite{3} has all this properties.

\begin{prim}{\rm
We build a Souslin scheme ${\bf V}^{\mathbb{S}} = \langle V^{\mathbb{S}}_a \rangle_{a \in \inftree}$ by recursion on $\mathsf{lh}(a)$.

Let ${\bf V}^{\mathbb{S}}_{\langle \rangle} := \mathbb{R}$, and let the set $\{{\bf V}^{\mathbb{S}}_a: \mathsf{lh}(a)=1\}$ equals $\{[i, i+1): i \in \mathbb{Z}\}$. When $\mathsf{lh}(a) \geq 1$, consider an interval $V^{\mathbb{S}}_a = [i, j)$. Let
$\langle x_n \rangle_{n \in \omega}$ be a sequence in $\mathbb{R}$ such that $\langle x_n \rangle_{n \in \omega} \xrightarrow{\langle \mathbb{R}, \tau_{\mathbb{R}} \rangle} j$, $x_0 = i$, $x_{n+1} >
x_n$, and $x_{n+1} - x_n \leq \frac{1}{\mathsf{lh}(a) + 1}$. Define $V^{\mathbb{S}}_{a\ \hat{}\ n} := [x_n, x_{n+1})$.
}\end{prim}

The next Corollary was first observed by Mikhail Patrakeev in private correspondence.

\begin{corollary} \label{st_vid_S}
$\mathbb{S} \cong \langle \baireset, \sigma_{\mathbb{S}} \rangle$.
\end{corollary}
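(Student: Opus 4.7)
The plan is to verify that the Souslin scheme $\mathbf{V}^{\mathbb{S}}$ constructed in the preceding Example is a locally strict Sorgenfrey base with strict branches for the Sorgenfrey line $\mathbb{S}$, and then invoke the ``moreover'' clause of Lemma~\ref{space_with_SB_image}.

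First I would check the structural properties of $\mathbf{V}^{\mathbb{S}}$, all of which follow directly from the recursive construction. Each $V^{\mathbb{S}}_a$ is a half-open interval $[i,j)$, hence open in $\tau_{\mathbb{S}}$; covering and local strictness hold because the subintervals $[x_n, x_{n+1})$ at a node $a$ are pairwise disjoint and their left endpoints increase to $j$. Completeness and strict branches both reduce to the length bound $x_{n+1} - x_n \leq \tfrac{1}{\mathsf{lh}(a)+1}$: for any $p \in \baireset$, the intervals $V^{\mathbb{S}}_{p \upharpoonright n}$ are nested with lengths tending to $0$, so the sequence of left endpoints is Cauchy and the intersection $\mathsf{fruit}(\mathbf{V}^{\mathbb{S}}, p)$ is a single point.

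The main step is to establish (S1) and (S2), which I would collapse into the stronger single statement that for every $q \in \baireset$, the unique point $z := \mathsf{fruit}(\mathbf{V}^{\mathbb{S}}, q)$ has $q$ as a $\tau_{\mathbb{S}}$-base branch. This yields (S2) directly; and since strict branches give $\mathsf{branches}(\mathbf{V}^{\mathbb{S}}, z) = \{q\}$, condition (S1) then holds trivially with $t := q$. The concrete identification I aim to prove is
\[
\mathsf{cut}(\mathbf{V}^{\mathbb{S}}, q, m) \cup \{z\} = [z, b_m),
\]
where $b_m$ denotes the right endpoint of $V^{\mathbb{S}}_{q \upharpoonright m}$. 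Because within each $V^{\mathbb{S}}_a$ the subinterval indexed by a larger $n$ lies strictly to the right of the one indexed by a smaller $n$, the tree relation $q \triangleleft p$ appearing in Remark~\ref{Aqn}(iv) translates exactly to ``the point associated with $p$ lies strictly to the right of $z$ inside $V^{\mathbb{S}}_{q \upharpoonright m}$''; the reverse inclusion requires checking that any point of $V^{\mathbb{S}}_{q \upharpoonright m}$ other than $z$ must branch away from $q$ at some position $\geq m$. Since the length of $V^{\mathbb{S}}_{q \upharpoonright m}$ is at most $1/m$, the family $\{[z, b_m) : m \in \omega\}$ forms a neighborhood base at $z$ in $\mathbb{S}$.

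The main obstacle I anticipate is exactly this bookkeeping identification of the cut with the half-open interval $[z, b_m)$, in particular matching the direction of the tree order $\triangleleft$ with the direction of $<$ on $\mathbb{R}$ and correctly handling the endpoint $z$ itself. Once it is in hand, applying the ``moreover'' clause of Lemma~\ref{space_with_SB_image} to the locally strict Sorgenfrey base $\mathbf{V}^{\mathbb{S}}$ with strict branches yields the desired homeomorphism $\mathbb{S} \cong \langle \baireset, \sigma_{\mathbb{S}} \rangle$.
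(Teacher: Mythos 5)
Your proposal is correct and follows exactly the paper's route: the paper's proof of Corollary~\ref{st_vid_S} simply asserts that $\mathbf{V}^{\mathbb{S}}$ is a locally strict Sorgenfrey base with strict branches and invokes the ``moreover'' clause of Lemma~\ref{space_with_SB_image}, leaving the verification to the reader, which is what you carry out. The only cosmetic caveat is that your identification $\mathsf{cut}(\mathbf{V}^{\mathbb{S}}, q, m) \cup \{z\} = [z, b_m)$ should be restricted to $m \geq 1$, since $V^{\mathbb{S}}_{\langle\rangle} = \mathbb{R}$ has no right endpoint and the level-one indexing of $\mathbb{Z}$ by $\omega$ is not order-preserving; this does not affect the conclusion, as the sets with $m \geq 1$ already form a neighborhood base.
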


\begin{proof}
The reader will easily prove that ${\bf V}^{\mathbb{S}}$ is a locally strict Sorgenfrey base for $\mathbb{S}$ that has strict branches. Then from Lemma
\ref{space_with_SB_image} it follows that $\mathbb{S} \cong \langle \baireset, \sigma_{\mathbb{S}} \rangle$.
\end{proof}

\begin{corollary} \label{descS}
    The Sorgenfrey line is, up to homeomorphism, the unique Hausdorff topological space that has a locally strict Sorgenfrey base with strict branches. $\qed$
\end{corollary}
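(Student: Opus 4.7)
The plan is to observe that Corollary \ref{descS} is a direct consequence of the two immediately preceding results, so the proposal is really just to identify how they combine. The statement has two parts: existence (some Hausdorff space has a locally strict Sorgenfrey base with strict branches, and that space is the Sorgenfrey line), and uniqueness (any such space is homeomorphic to $\mathbb{S}$).

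For existence, I would invoke the Example constructing ${\bf V}^{\mathbb{S}}$ together with the statement inside the proof of Corollary \ref{st_vid_S}, which asserts that ${\bf V}^{\mathbb{S}}$ is a locally strict Sorgenfrey base for $\mathbb{S}$ with strict branches. So $\mathbb{S}$ is a witness to the existence of such a space.

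For uniqueness, suppose $\langle Y, \tau \rangle$ is any Hausdorff topological space that has a locally strict Sorgenfrey base with strict branches. Then the "moreover" clause of Lemma \ref{space_with_SB_image} immediately yields $\langle Y, \tau \rangle \cong \langle \baireset, \sigma_{\mathbb{S}} \rangle$. But Corollary \ref{st_vid_S} already identifies $\langle \baireset, \sigma_{\mathbb{S}} \rangle$ with $\mathbb{S}$, so by transitivity of $\cong$ we conclude $\langle Y, \tau \rangle \cong \mathbb{S}$.

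There is no real obstacle here; the two preceding results were set up precisely so that this corollary falls out. The only thing one has to be careful about is that the "moreover" half of Lemma \ref{space_with_SB_image} really does require both hypotheses (locally strict and strict branches) in order to promote the continuous open surjection $f$ built there into a bijection, and hence (being a continuous open bijection) into a homeomorphism. Since that is exactly the hypothesis imposed in the corollary, nothing further is needed beyond writing the two-line argument above.
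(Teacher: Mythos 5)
Your proposal is correct and matches the paper's intent exactly: the paper leaves Corollary \ref{descS} unproved (marked $\qed$) precisely because it is the immediate combination of the ``moreover'' clause of Lemma \ref{space_with_SB_image} with Corollary \ref{st_vid_S} that you describe, with existence witnessed by ${\bf V}^{\mathbb{S}}$. Nothing further is needed.
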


\section{The spaces that are R-bidirected along Q}

\begin{definition}{\rm
    Let $\langle X, \tau \rangle$ be a topological space, $Q$ a dense subset of $\langle X, \tau \rangle$, $R$ a binary relation on $X$, and
    $x \in X$. Then:
    \begin{itemize}
        \item An open neighborhood $U$ of $x$ is {\it $R$-right along $Q$} if $x R y$ for all $y \in (U \setminus \{x\}) \cap Q$;
        \item we say that $x$ {\it looks to the $R$-right along $Q$} if the following conditions hold:
            \begin{itemize}
                \item there exists an $R$-right along $Q$ open neighborhood of $x$;
                \item for any open neighborhood $U$ of $x$ there exists $y \in (U \setminus \{x\}) \cap Q$ such that $y {\downarrow}_R$ is a neighborhood of $x$.
            \end{itemize}
        \item An open neighborhood $U$ of $x$ is {\it $R$-left along $Q$} if $y R x$ for all $y \in (U \setminus \{x\}) \cap Q$;
        \item we say that $x$ {\it looks to the $R$-left along $Q$} if the following conditions hold:
            \begin{itemize}
                \item there exists an $R$-left along $Q$ open neighborhood of $x$;
                \item for any open neighborhood $U$ of $x$ there exists $y \in (U \setminus \{x\}) \cap Q$ such that $y {\uparrow}_R$ is a neighborhood of $x$.
            \end{itemize}
    \end{itemize}
}\end{definition}

Recall that a binary relation $R$ on a set $X$ is {\it asymmetric} if $xRy \Rightarrow \neg yRx$ for all $x, y \in X$.

\begin{definition}\label{bidir}{\rm
    Let $\langle X, \tau \rangle$ be a topological space, $Q$ a dense subset of $\langle X, \tau \rangle$, and $R$ an asymmetric binary relation on $X$. Then
    we say that $\langle X, \tau \rangle$ is {\it R-bidirected along Q} if there are dense subsets $A_l, A_r$ of $\langle X, \tau \rangle$ such that
    \begin{itemize}
        \item $X = A_l \cup A_r$ and $A_l \cap A_r = \emptyset$;
        \item $x$ looks to the $R$-right along Q for all $x \in A_r$;
        \item $x$ looks to the $R$-left along Q for all $x \in A_l$.
    \end{itemize}
}\end{definition}

\begin{prim}{\rm
It is easy to check that the double-arrow space is $R$-bidirected along itself, where $R$ is the strict lexicographic order on it.}
\end{prim}

\begin{lemma} \label{main_tech_lem}
    Let $\langle X, \tau \rangle$ be a Hausdorff space. Suppose that there are a dense subset $Q$ of $\langle X, \tau \rangle$ and an asymmetric binary relation $R$ on $X$ such that $\langle X, \tau \rangle$ is R-bidirected along Q. Then $\langle X, \tau \rangle$ is not a continuous open image of $\mathbb{S}$.
\end{lemma}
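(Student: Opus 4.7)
The strategy is proof by contradiction. Assume $\langle X,\tau\rangle$ is a continuous open image of $\mathbb{S}$. By Theorem~\ref{desc_op_im}, there is a Sorgenfrey base $\mathbf{V}=\langle V_a\rangle_{a\in\inftree}$ for $\langle X,\tau\rangle$. The goal, using $\mathbf{V}$ together with the bidirected data $A_l$, $A_r$, $Q$, is to exhibit two $Q$-points $u,v$ satisfying both $uRv$ and $vRu$, contradicting asymmetry of~$R$.

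The central local tool pairs cut-neighborhoods with the looking-conditions. For $x\in A_l$, any open neighborhood $U$ of $x$ contains some cut-neighborhood $\mathsf{cut}(\mathbf{V},q,n)\cup\{x\}$ (for any base branch $q$ of $x$), which by looking-left~(i) may be squeezed inside an $R$-left along $Q$ neighborhood $W$ of $x$; then looking-left~(ii) yields a $Q$-point $y\in (W\setminus\{x\})\cap Q$ with $yRx$ together with a further cut-neighborhood $C\subseteq y{\uparrow}_R$ of $x$. By asymmetry of $R$, $y\notin C$. The dual recipe at $x\in A_r$ produces $y'\in Q$ with $xRy'$ and a cut-neighborhood $C'\subseteq y'{\downarrow}_R$. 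I then run an alternating construction: start from $x_0\in A_l$ and extract $y_0,C_0$; by density of $A_r$ inside the open set $C_0\setminus\{x_0\}$, pick $x_1\in A_r\cap(C_0\setminus\{x_0\})$ and extract $y_1,C_1\subseteq C_0$; then pick $x_2\in A_l\cap(C_1\setminus\{x_1\})$; and so on. Property~(S1) of $\mathbf{V}$ lets me choose base branches at the successive $x_n$'s agreeing on arbitrarily long initial segments, so the $C_n$'s nest along a coherent branch $q^\ast\in\baireset$ whose fruit is nonempty by completeness of $\mathbf{V}$.

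The accumulated bounding relations---$y_{2k}Rz$ for $z\in C_{2k}$ and $zRy_{2k+1}$ for $z\in C_{2k+1}$---place any limit point $x^\ast\in\bigcap_n C_n$ between the even- and odd-indexed $y_n$'s in $R$. The naive alternation at this level only yields a consistent chain of $R$-relations, not a cycle, because each $y_n$ lies outside every later $C_m$ (by asymmetry: $y_n\notin y_n{\uparrow}_R\supseteq C_n\supseteq C_m$ for $m\geq n$). Closing the loop, which is where the technical heart of the argument must lie, should require applying the looking-conditions to the $y_n$'s themselves---each $y_n\in Q\subseteq A_l\cup A_r$ looks in one direction---so that an earlier $Q$-witness is folded into a later cut-neighborhood (or equivalently, a later $x$-point is chosen to coincide with, or lie in a tailored position relative to, an earlier $y$-witness). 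Combined with (S1)--(S2) to keep the Souslin-scheme branch coherent across this revisiting, this bookkeeping should finally produce two $Q$-points mutually $R$-related. The main obstacle is precisely orchestrating this revisit so that the new cut-neighborhood hosts an earlier $y$ with the opposite bounding orientation; this is where property~(S1), together with the density of $A_l$ and $A_r$ inside every cut-neighborhood, is expected to give exactly the flexibility required.
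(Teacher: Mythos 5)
Your setup matches the paper's: assume a Sorgenfrey base $\mathbf{V}$ exists, run an alternating recursion through $A_l$ and $A_r$, use (S1) to realign base branches with the finite stem built so far, and use the looking-conditions to extract $Q$-witnesses bounding ever-smaller cut-neighborhoods that nest along a limit branch $p$. The data you accumulate --- for each level $k$ a point of $Q\cap\mathsf{cut}({\bf V},p,k)$ that is $R$-below and one that is $R$-above every point of $\mathsf{fruit}({\bf V},p)$ --- is exactly the invariant the paper's recursion maintains. The gap is in how you propose to finish. You aim to contradict the asymmetry of $R$ by closing a cycle among the $Q$-witnesses, you correctly observe that the naive alternation only yields a consistent chain, and you then defer the ``closing of the loop'' to an unspecified revisiting scheme. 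That step cannot be completed as described: $R$ is only assumed asymmetric, every witness sits on its correct side by construction, and nothing forces two of them into a mutual pair. No cycle is ever produced, and none is needed.

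The paper's contradiction is with property (S2) of the Sorgenfrey base, not with asymmetry. Let $z\in\mathsf{fruit}({\bf V},p)$. Every set $\mathsf{cut}({\bf V},p,k)\cup\{z\}$ contains a point $x_1\in Q$ with $x_1Rz$ (hence $\neg\, zRx_1$) and a point $x_2\in Q$ with $zRx_2$ (hence $\neg\, x_2Rz$), so no such set is $R$-right along $Q$ and none is $R$-left along $Q$. But $z$ lies in $A_r$ or in $A_l$, so $z$ has a one-sided open neighborhood $W$; if $\mathsf{cutBase}({\bf V},p,z)$ were a neighborhood base at $z$, some member would be contained in $W$ and would therefore itself be one-sided --- a contradiction. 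Hence $p\notin\mathsf{BB}({\bf V},z,\tau)$ for every $z$ in the fruit, which violates (S2), since (S2) demands that $p$ be a base branch of \emph{some} point. Your construction already supplies everything this argument needs; replacing the attempted cycle with this observation closes the proof.
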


\begin{proof}
    Assume the converse. From Theorem \ref{desc_op_im} it follows that there exists ${\bf V} = \langle V_a \rangle_{a \in \inftree}$ is a Sorgenfrey base for $\langle X, \tau \rangle$. Let $A_l$ and $A_r$ be the sets from Definition \ref{bidir}. We build, by recursion on $n$, a sequence $\langle x_n \rangle_{n \in \omega}$ in $Q$, a sequence $\langle t_n \rangle_{n \in \omega}$ in $\baireset$, and an $\sqsubset$-increasing sequence $\langle p_n \rangle_{n \in \omega}$ in $\inftree$ such that

    \begin{equation}\label{property_of_p}
         \forall k \in \omega \exists x_1, x_2 \in \mathsf{cut}({\bf V}, p, k) \cap Q \mbox{ such that } x_1 R z R x_2 \mbox{ for all } z \in \mathsf{fruit}({\bf V}, p),
    \end{equation}
    where $p = \bigcup_{n \in \omega} p_n$.

    Take $p_0 := \langle \rangle$. Suppose we have constructed $p_0, \dots, p_n$; $x_0, \dots, x_{n-1}$; and $t_{0}, \dots, t_{n-1}$. Now we consider two cases. Let $n$ be even. Then take any point $x \in V_{p_n} \cap A_r$. From property (S1) of ${\bf V}$ it follows that there exists $q \in \mathsf{BB}({\bf V}, x, \tau)$ such that

    \begin{equation}\label{q__lenpn_eq_p_n}
        p_n \sqsubset q.
    \end{equation}

    Take $m > \mathsf{lh}(p_n)$ such that $\mathsf{cut}({\bf V}, q, m) \cup \{x\} \subseteq V_{p_n}$. Since $x$ looks to the $R$-right along $Q$, we can take $x_n \in (\mathsf{cut}({\bf V}, q, m) \setminus \{x\}) \cap Q$ such that

    \begin{equation}\label{x_n_greater_U}
         x_n{\downarrow}_R \mbox{ is a neighborhood of } x.
    \end{equation}

    From (iv) of Remark \ref{Aqn} it follows that we can take $t_n \in \mathsf{branches}({\bf V}, x_n)$ such that

    \begin{equation}\label{property_of_t}
         t_n \in \mathsf{rsequences}(q, m).
    \end{equation}

    Consider $k > m$ such that

    \begin{equation}\label{tk_greater_qk}
        q {\upharpoonright} k \triangleleft t_n {\upharpoonright} k.
    \end{equation}

    Since $q \in \mathsf{BB}({\bf V}, x, \tau)$, we see that there exists $a > k$ such that

    \begin{equation}\label{cut_in_U}
        \mathsf{cut}({\bf V}, q, a) \cup \{x\} \subseteq x_n{\downarrow}_R.
    \end{equation}

    Let $p_{n + 1}$ be any element of $\mathsf{rsubtree}(q, a)$. From $(\ref{q__lenpn_eq_p_n})$ and inequalities $a > k > m > \mathsf{lh}(p_n)$ it follows that $p_n = q {\upharpoonright} \mathsf{lh}(p_n) \sqsubset q {\upharpoonright} a \sqsubset p_{n+1}$. Since $p_{n + 1} \in \mathsf{rsubtree}(q, a)$, from (iv) of Remark \ref{Aqn} and (\ref{cut_in_U}) it follows that

    \begin{equation}\label{prop_of_xn_chet}
        z R x_n \mbox{ for all } z \in V_{p_{n+1}}.
    \end{equation}

    Now we show that

    \begin{equation}\label{prop_of_tn_chet}
        p_n \sqsubset t_n \mbox{ and } p_{n+1} \triangleleft t_n {\upharpoonright} \mathsf{lh}(p_{n+1}).
    \end{equation}

    From inequality $m > \mathsf{lh}(p_n)$, (\ref{q__lenpn_eq_p_n}), and (\ref{property_of_t}) it follows that $p_n \sqsubset t_n$. To prove the second part of (\ref{prop_of_tn_chet}), we must observe that $q {\upharpoonright} k \sqsubset p_{n+1}$ and use (\ref{tk_greater_qk}).

    Now let $n$ be odd. If we argue as above by taking $x \in V_{p_n} \cap A_l$, $x_n$ such that $x_n{\uparrow}_R$ is a neighborhood of $x$, and $t_n \in \mathsf{branches}({\bf V}, \mathsf{x}(n))$, then we can choose $p_{n+1}$ such that

    \begin{equation}\label{prop_of_xn_nechet}
        x_n R z \mbox{ fol all } z \in V_{p_{n+1}};
    \end{equation}

    and

    \begin{equation}\label{prop_of_tn_nechet}
        p_n \sqsubset t_n \mbox{ and } p_{n+1} \triangleleft t_n {\upharpoonright} \mathsf{lh}(p_{n+1}).
    \end{equation}

    We now prove (\ref{property_of_p}). Let $k \in \omega$, $n > k$, and $n$ is even. Then $p {\upharpoonright} k \sqsubset p_n$. So from (\ref{prop_of_tn_chet}) it follows that $t_n \in \mathsf{rsequences}(p, k)$. Hence using (iv) of Remark \ref{Aqn}, we get $x_n \in \mathsf{cut}({\bf V}, p, k)$. Finally, from (\ref{prop_of_xn_chet}) it follows that $z R x_n$ for all $z \in \mathsf{fruit}({\bf V}, p) \subseteq V_{p_{n+1}}$. If we argue as above by taking $m > k$ such that $m$ is odd, then we can show that $x_m \in \mathsf{cut}({\bf V}, p, k)$ and $x_m R z$ for all $z \in \mathsf{fruit}({\bf V}, p) \subseteq V_{p_{m+1}}$.

    Let us prove that

    \begin{equation}\label{contr}
        \forall z \in \mathsf{fruit}({\bf V}, p) : p \not \in \mathsf{BB}({\bf V}, z, \tau).
    \end{equation}

    Let $z \in \mathsf{fruit}({\bf V}, p)$, we consider two cases.

    Case 1: $z \in A_r$. From (\ref{property_of_p}) it follows that for any $k \in \omega$ there exists $x_1 \in \mathsf{cut}({\bf V}, p, k) \cap Q$ such that $x_1 R z$, and so $\neg zRx_1$ and $z \not= x_1$. Hence for all $k \in \omega$ if $\mathsf{cut}({\bf V}, p, k) \cup \{z\}$ is an open neighborhood of $z$, then it is not $R$-right along $Q$. And since $z$ looks to the $R$-right along $Q$, we see that $\mathsf{cutBase}({\bf V}, p, z)$ is not an open neighborhood base at the point $z$. So $p \not \in \mathsf{BB}({\bf V}, z, \tau)$.

    Case 2: $z \in A_l$. Arguing as above, we can take $x_2 \in \mathsf{cut}({\bf V}, p, k) \cap Q$ such that $z R x_2$, and so $p \not \in \mathsf{BB}({\bf V}, z, \tau)$.

    Formula (\ref{contr}) contradicts property (S2) of ${\bf V}$. The lemma is proved.
\end{proof}

\section{The spaces that are not open images of the Sorgenfrey line}

A subset of a topological space is called {\it co-dense} if its complement is dense.

\begin{theorem} \label{densecodense}
    Suppose that $\langle X, \tau \rangle$ is a Hausdorff topological space, $S \subseteq X$ is a dense and co-dense subset of $X$. If $\langle S, \tau {\upharpoonright} S \rangle$ is homeomorphic to the Sorgenfrey line, then $\langle X, \tau \rangle$ is not a continuous open image of the Sorgenfrey line.
\end{theorem}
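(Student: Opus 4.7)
The plan is to invoke Lemma~\ref{main_tech_lem}: I will exhibit a dense subset $Q$ of $X$ and an asymmetric binary relation $R$ on $X$ so that $\langle X, \tau\rangle$ is $R$-bidirected along $Q$. Fix a homeomorphism $\phi \colon \langle S, \tau{\upharpoonright}S\rangle \to \mathbb{S}$; through $\phi$ the usual linear order of $\mathbb{R}$ is transferred to $S$. I take $Q := S$, which is dense by hypothesis, and define
\[
x R y \ :\longleftrightarrow\ \exists\, r \in \mathbb{R},\ \exists\, U, V \in \tau \text{ with } x \in U,\ y \in V,\ \phi(U \cap S) \subseteq (-\infty, r),\ \phi(V \cap S) \subseteq [r, \infty).
\]
Asymmetry of $R$ follows from the density of $S$: assuming both $xRy$ with witness $r_1$ and $yRx$ with witness $r_2$, intersecting the four witnessing neighborhoods produces nonempty open sets whose $S$-traces must lie in $[r_2, r_1)$ and $[r_1, r_2)$ respectively, forcing the contradictory inequalities $r_2 < r_1 < r_2$. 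Partition $X$ by $A_r := S$ and $A_l := X \setminus S$; both are dense.

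For $x \in A_r$, the Sorgenfrey neighborhood structure of $S$ at $x$ lifts to $X$: choose $U \in \tau$ with $U \cap S = \phi^{-1}[\phi(x), \phi(x)+\varepsilon)$. Every $y \in (U \setminus \{x\}) \cap S$ has $\phi(y) > \phi(x)$, so picking $r \in (\phi(x), \phi(y))$ and lifted Sorgenfrey neighborhoods witnesses $x R y$; hence $U$ is $R$-right along $Q$. For the second (``looks'') condition, given any open $W \ni x$, pick $y \in W \cap S$ with $\phi(y) \in (\phi(x), \phi(x)+\varepsilon)$: any open $U' \ni x$ with $\phi(U' \cap S) \subseteq [\phi(x), \phi(y))$ is contained in $y{\downarrow}_R$, so $y{\downarrow}_R$ is a neighborhood of $x$.

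The crux is showing each $x \in A_l$ looks $R$-left along $Q$. Set
\[
\alpha(x) := \sup\{r \in \mathbb{R} : \exists U \in \tau,\ x \in U,\ \phi(U \cap S) \subseteq [r, \infty)\}.
\]
Unpacking the definitions, looking $R$-left for $x$ reduces to: (i) there exists an open $U_0 \ni x$ with $\phi(U_0 \cap S) \subseteq (-\infty, \alpha(x))$, and (ii) $x \in \mathsf{Cl}_{\langle X, \tau\rangle}(\phi^{-1}(-\infty, \alpha(x)))$. Since $S$ is dense, (ii) follows from (i). The decisive input towards (i) is that, since $x \notin S$, applying the Hausdorff axiom to $x$ against each $\phi^{-1}(r) \in S$ forces the neighborhood filter of $x$ restricted to $S$ to fail Sorgenfrey-convergence at every real; combined with the density of $X \setminus S$, this is designed to preclude the pathological straddling scenario in which $x$'s $S$-traces span a nontrivial $\phi$-interval without left-attainment at $\alpha(x)$. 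I expect establishing (i) to be the main obstacle. Once (i) is in place the bidirection is complete, and Lemma~\ref{main_tech_lem} concludes that $\langle X, \tau\rangle$ is not a continuous open image of $\mathbb{S}$.
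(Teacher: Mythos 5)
Your overall strategy matches the paper's: reduce to Lemma~\ref{main_tech_lem} by making $X$ bidirected along $S$ with $A_r = S$ and $A_l = X \setminus S$, and the treatment of points of $S$ (looking right) is essentially correct. But the proof has a genuine gap exactly where you flag it, and the gap is not just an omitted verification --- the relation you chose makes the claim false. Your $\alpha(x)$ is the \emph{supremum of lower bounds} of $\phi(U\cap S)$ over neighborhoods $U$ of $x$; unwinding your definition of $R$, one gets $yRx \iff \phi(y) < \alpha(x)$ for $y \in S$, and $\alpha(x)$ coincides with what the paper calls $\mathsf{m}(x)$, the \emph{smallest} Euclidean cluster value of $S$-points near $x$. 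Your condition (i) therefore demands a neighborhood of $x$ whose entire $S$-trace lies below the smallest cluster value. If the set $\mathsf{L}(x)$ of cluster values is not a singleton (the paper only proves it is well-ordered, not a singleton --- e.g.\ a point approached from the left by $S$-points near two different reals is Hausdorff-compatible), then every neighborhood of $x$ contains $S$-points near $\mathsf{M}(x)=\sup\mathsf{L}(x) > \mathsf{m}(x)$, so (i) fails and $x$ does not look $R$-left. The paper avoids this by using two different thresholds: $xRy$ for $x\in S$, $y\notin S$ is defined via $x < \mathsf{M}(y)$ (so the ``$R$-left neighborhood'' only needs a trace below the \emph{largest} cluster value, its Lemma (i)), while the ``looks'' half is handled through $\mathsf{m}$ (its Lemma (iii)--(v)), and the two are reconciled by the technical Lemma~\ref{lemma_in_th}.

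Even after repairing the relation, the content you defer --- ``I expect establishing (i) to be the main obstacle'' --- is precisely the entire substance of the paper's proof. The sketch you give (Hausdorffness against each $\phi^{-1}(r)$ plus density of $X\setminus S$) does not yield it: the paper's arguments are sequential and need two reductions you omit, namely to first-countable $X$ (legitimate because first countability is preserved by continuous open maps) and to a bounded copy $[0,1)_{\mathbb S}$ of the Sorgenfrey line, so that sequences of $S$-points have Euclidean-convergent subsequences in the compact set $[0,1]$ and the relevant suprema are finite. Without the second reduction your $\alpha(x)$ may even be $-\infty$, in which case (i) is vacuously unsatisfiable. So the proposal is a correct skeleton with the load-bearing lemma missing and the relation miscalibrated.
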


\begin{proof}
    Since $[0,1)_{\mathbb{S}} \cong \mathbb{S}$, without loss of generality, we can assume that $S = [0,1)$ and $[0,1)_{\mathbb{S}} = \langle S, \tau {\upharpoonright} S \rangle$.

    If $\langle X, \tau \rangle$ is not first-countable, then it is not a continuous open image of the Sorgenfrey line, because the first axiom of countability is preserved by continuous open maps. So suppose that $\langle X, \tau \rangle$ is first-countable.

    Using Lemma \ref{main_tech_lem} it is enough to prove that there exists an asymmetric relation $R$ on $X$ such that $\langle X, \tau \rangle$ is $R$-bidirected along $[0, 1)$. For each $A \subseteq X$, by $\breve{A}$ we denote $A \cap [0,1)$.

    Denote by $\mathsf{L}$ the function with domain $X \setminus [0, 1)$ such that for all $z \in X \setminus [0, 1)$
        $$
            \mathsf{L}(z) := \{x \in [0, 1] : \exists p \in {}^{\omega}[0, 1)(p \xrightarrow{\langle X, \tau \rangle} z\ \land\ p \xrightarrow{[0,1]_{\mathbb{R}}} x)\}.
        $$

    Since $\langle X, \tau \rangle$ is a first countable space and $[0,1)$ is dense in it, we see that $\mathsf{L}(z) \not = \emptyset$ for all $z \in X \setminus [0, 1)$. We prove that

    \begin{equation}\label{le_ord_lz}
      < \mbox{ well-orders } \mathsf{L}(z) \text{ for all } z \in X \setminus [0, 1).
    \end{equation}

    Let $z \in X \setminus [0, 1)$. Assume the converse. Let $q \in {}^{\omega}\mathsf{L}(z)$ be such that $q(n + 1) < q(n)$ for all $n \in \omega$. Consider $x \in [0, 1)$ such that

    \begin{equation}\label{xn_conv_x}
        q \xrightarrow{[0,1)_{\mathbb{S}}} x.
    \end{equation}

    Let us prove that

    \begin{equation}\label{xz}
        U_z \cap U_x \not = \emptyset \mbox{ for all } U_x \in \mathsf{nbhds}(x, \tau) \mbox{ and } U_z \in \mathsf{nbhds}(z, \tau).
    \end{equation}

    Let $U_x \in \mathsf{nbhds}(x, \tau)$ and $U_z \in \mathsf{nbhds}(z, \tau)$. Without loss of generality, we can assume that $\breve{U}_x = [x, x + \varepsilon)$. From (\ref{xn_conv_x}) it follows that there exists $n \in \omega$ such that $q(n) \in \breve{U}_x \subseteq U_x$. Since $q(n) \in \mathsf{L}(z)$, there exists $p \in {}^{\omega}[0, 1)$ such that $p \xrightarrow{\langle X, \tau \rangle} z$ and  $p \xrightarrow{[0,1]_{\mathbb{R}}} q(n)$. Since $q(n) \not = x$, we obtain $q(n) \in (x, x + \varepsilon)$, and so $p$ is eventually in $(x, x + \varepsilon) \subseteq U_x$. Also $p$ is eventually in $U_z$, hence $U_x \cap U_z \not = \emptyset$. Formula (\ref{xz}) contradicts the Hausdorff property of $\langle X, \tau \rangle$, so (\ref{le_ord_lz}) is proved.

    Denote by $\mathsf{m}$ the function that takes each point $z \in X \setminus [0,1)$ to the $<$-minimal element of $\mathsf{L}(z)$. Also let $\mathsf{M}$ be the function that takes each point $z \in X \setminus [0,1)$ to the $\mathsf{sup}_{[0,1]}(\mathsf{L}(z))$. Now we prove a technical lemma about this functions.

    \begin{lemma} \label{lemma_in_th} Let $z \in X \setminus [0, 1)$ and $a, b \in \mathbb{R}$ such that $a<b$. Then
        \begin{itemize}
          \item[(i)] $\exists U \in \mathsf{nbhds}(z, \tau)(\breve{U} < \mathsf{M}(z))$;
          \item[(ii)] $\forall U \in \mathsf{nbhds}(z, \tau)(\breve{U} = [a,b) \rightarrow \mathsf{M}(z) \leq b)$;
          \item[(iii)] $\forall x \in [0,1)(x < \mathsf{m}(z) \rightarrow \exists U \in \mathsf{nbhds}(z, \tau)(x < \breve{U}))$;
          \item[(iv)]$\forall x \in [0,1)\ \forall U \in \mathsf{nbhds}(z, \tau) (x < \breve{U} \rightarrow x < \mathsf{m}(z))$;
          \item[(v)] $\forall U \in \mathsf{nbhds}(z, \tau)\exists x \in \breve{U}(x < \mathsf{m}(z))$.
        \end{itemize}
    \end{lemma}

    \begin{proof}
        (i). Assume the converse. Assume that for any $U \in \mathsf{nbhds}(z, \tau)$ there exists $x \in \breve{U}$ such that $\mathsf{M}(z) \leq x$. Let $p \in {}^{\omega}[0,1)$ be such that

        \begin{equation} \label{a}
            \forall n \in \omega (\mathsf{M}(z) \leq p(n))
        \end{equation}
        and
        \begin{equation} \label{b}
            p \xrightarrow{\langle X, \tau \rangle} z.
        \end{equation}

        Take $x \in [0,1]$ such that $p\sp{\prime} \xrightarrow{[0,1]_{\mathbb{R}}} x$, where $p\sp{\prime}$ is a subsequence of $p$. Then from (\ref{b}) it follows that $p\sp{\prime} \xrightarrow{\langle X, \tau \rangle} z$, and so $x \in \mathsf{L}(z)$. From (\ref{a}) it follows that $\mathsf{M}(z) \leq x$. From equality $\mathsf{M}(z) = \mathsf{sup}_{[0,1]}(\mathsf{L}(z))$ it follows that $x = \mathsf{M}(z)$, and so using (\ref{a}), we get $p\sp{\prime} \xrightarrow{[0,1)_{\mathbb{S}}} \mathsf{M}(z)$. Hence $p\sp{\prime} \xrightarrow{\langle X, \tau \rangle} \mathsf{M}(z)$. This contradicts the Hausdorff property of $\langle X, \tau \rangle$, so we prove (i).

        (ii). Assume the converse. Suppose that there exists $U \in \mathsf{nbhds}(z, \tau)$ such that $\breve{U} = [a,b)$ and $b < \mathsf{M}(z)$. Since $\mathsf{M}(z) = \mathsf{sup}_{[0,1]}(\mathsf{L}(z))$, we see that there exists $q \in {}^{\omega} \mathsf{L}(z)$ such that

        $$
            q \xrightarrow{[0,1]_{\mathbb{R}}} \mathsf{M}(z)
        $$
        and
        $$
            q(n) \leq \mathsf{M}(z) \text{ for all } n \in \omega.
        $$

        Hence there exists $m \in \omega$ such that $b < q(m) \leq \mathsf{M}(z)$. $q(m) \in \mathsf{L}(z)$, so there exists $p \in {}^{\omega}[0,1)$ such that

        \begin{equation} \label{c}
            p \xrightarrow{\langle X, \tau \rangle} z
        \end{equation}
        and
        \begin{equation} \label{d}
            p \xrightarrow{[0,1]_{\mathbb{R}}} q(m).
        \end{equation}

        Let $\varepsilon > 0$ be such that $b < (q(m) - \varepsilon, q(m) + \varepsilon)$. Then $(q(m) - \varepsilon, q(m) + \varepsilon) \cap U = \emptyset$ and $p$ is eventually in $(x_m - \varepsilon, x_m + \varepsilon)$. This contradicts formula (\ref{c}), so we prove (ii).

        (iii). Assume the converse. Suppose that there exists $x \in [0,1)$ such that $x < \mathsf{m}(z)$ and $\forall U \in \mathsf{nbhds}(z, \tau)$ there exists $y \in U$ such that $y \leq x$. Take $p \in {}^{\omega}[0,1)$ such that

        \begin{equation} \label{e}
            p \xrightarrow{\langle X, \tau \rangle} z
        \end{equation}
        and
        \begin{equation} \label{f}
            p(n) \leq x \text{ for all } n \in \omega.
        \end{equation}

        Consider $y \in [0,1]$ such that $p\sp{\prime} \xrightarrow{[0,1]_{\mathbb{R}}} y$, where $p\sp{\prime}$ is a subsequence of $p$. Then from (\ref{e}) it follows that $p\sp{\prime} \xrightarrow{\langle X, \tau \rangle} z$, and so $y \in \mathsf{L}(z)$. From (\ref{f}) it follows that $y \leq x < \mathsf{m}(z)$, this contradicts $\mathsf{m}(z)$ is the $<$-minimal element of $\mathsf{L}(z)$, so we prove (iii).

        (iv). Assume the converse. Suppose that there exists $x \in [0,1)$ and $U \in \mathsf{nbhds}(z, \tau)$ such that $x < \breve{U}$ and $\mathsf{m}(z) \leq x$. Since $\mathsf{m}(z) \in \mathsf{L}(z)$, we see that there exists $p \in {}^{\omega}[0,1)$ such that
        \begin{equation} \label{g}
            p \xrightarrow{\langle X, \tau \rangle} z
        \end{equation}
        and
        \begin{equation} \label{h}
            p \xrightarrow{[0,1]_{\mathbb{R}}} \mathsf{m}(z).
        \end{equation}

        Now let us consider two cases. Case 1: $x = \mathsf{m}(z)$. Since $x < \breve{U}$, from (\ref{g}) it follows that there exists $n \in \omega$ such that $x < p(k)$ for all $k > n$. Hence using (\ref{h}) and equality $x = \mathsf{m}(z)$, we get $p \xrightarrow{[0,1)_{\mathbb{S}}} x$, and so $p \xrightarrow{\langle X, \tau \rangle} x$. This contradicts the Hausdorff property of $\langle X, \tau \rangle$.

        Case 2: $x > \mathsf{m}(z)$. Take $\varepsilon > 0$ such that $(\mathsf{m}(z) - \varepsilon, \mathsf{m}(z) + \varepsilon) < x$. Since $x < \breve{U}$, we see that $(\mathsf{m}(z) - \varepsilon, \mathsf{m}(z) + \varepsilon) \cap U = \emptyset$. From (\ref{h}) it follows that $p$ is eventually in $(\mathsf{m}(z) - \varepsilon, \mathsf{m}(z) + \varepsilon)$, but this contradicts formula (\ref{g}), so we prove (iv).

        (v). Assume the converse. Suppose that there exists $U \in \mathsf{nbhds}(z, \tau)$ such that
        \begin{equation} \label{i}
            \mathsf{m}(z) \leq x \text{ for all } x \in \breve{U}.
        \end{equation}

        From $\mathsf{m}(z) \in \mathsf{L}(z)$ it follows that there exists $p \in {}^{\omega}[0,1)$ such that
        \begin{equation} \label{j}
            p \xrightarrow{\langle X, \tau \rangle} z
        \end{equation}
        and
        \begin{equation} \label{k}
          p \xrightarrow{[0,1]_{\mathbb{R}}} \mathsf{m}(z).
        \end{equation}

        From (\ref{j}) it follows that $p$ is eventually in $U$, and so from (\ref{i}) and (\ref{k}) it follows that $p \xrightarrow{[0,1)_{\mathbb{S}}} \mathsf{m}(z)$, hence $p \xrightarrow{\langle X, \tau \rangle} \mathsf{m}(z)$. This contradicts the Hausdorff property of $\langle X, \tau \rangle$, so we prove (v).
    \end{proof}
    Define

    \[
    \begin{split}
        xRy \iff &x < y\ \lor \\
                 &y \in X \setminus [0,1) \land x < \mathsf{M}(y)\ \lor \\
                 &x \in X \setminus [0,1) \land \mathsf{M}(x) \leq y.
    \end{split}
    \]

    It is not hard to prove that $R$ is asymmetric. Define $A_r := [0,1)$ and $A_l := X \setminus [0,1)$. Note that

    \begin{equation}\label{xaral}
        X = A_r \cup A_l \text{ and } A_r \cap A_l = \emptyset.
    \end{equation}

    Let us prove that

    \begin{equation} \label{xright}
        x \text{ looks to the } R \text{-right along } [0,1) \text{ for all } x \in A_r.
    \end{equation}

    Let $x \in A_r = [0,1)$. There exists $W \in \mathsf{nbhds}(x, \tau)$ such that $\breve{W} = [x,1)$. It is easy to prove that $W$ is $R$-right along $[0,1)$.

    Now let $U$ be an arbitrary element of $\mathsf{nbhds}(x, \tau)$. Take $y \in \breve{U}$ such that $x < y$ and $V \in \mathsf{nbhds}(x, \tau)$ such that $\breve{V} = [x, y)$. Let us prove that
    \begin{equation*}
        V \subseteq y {\downarrow}_R.
    \end{equation*}

    Let $z \in V$. If $z \in \breve{V}$, then $z < y$, and so $zRy$. Suppose that $z \in V \setminus [0,1)$. Then from (ii) of Lemma \ref{lemma_in_th} it follows that $\mathsf{M}(z) \leq y$, and so by definition of $R$, we get $zRy$.

    Now we shall prove that

    \begin{equation}\label{zleft}
        z \text{ looks to the } R \text{-left along } [0,1) \text{ for all } z \in A_l.
    \end{equation}

    Let $z \in A_l = X \setminus [0,1)$. From (i) of Lemma \ref{lemma_in_th} it follows that there exists $U \in \mathsf{nbhds}(z, \tau)$ such that $\breve{U} < \mathsf{M}(z)$, and so by definition of $R$, we get $xRz$ for all $x \in \breve{U}$, hence $U$ is an $R$-left along $[0,1)$.

    Now let $U$ be an arbitrary element of $\mathsf{nbhds}(z, \tau)$. From (v) of Lemma \ref{lemma_in_th} it follows that there exists $x \in \breve{U}$ such that

    \begin{equation}\label{xlemz}
        x < \mathsf{m}(z).
    \end{equation}

    From (\ref{xlemz}) and (iii) of Lemma \ref{lemma_in_th} it follows that there exists $V \in \mathsf{nbhds}(z, \tau)$ such that

    \begin{equation}\label{xleVz}
        x < \breve{V}.
    \end{equation}

    Let us prove that

    \begin{equation*}
        V \subseteq x {\uparrow}_R.
    \end{equation*}

    Let $y \in V$. If $y \in \breve{V}$, then from (\ref{xleVz}) it follows that $x < y$, and so $xRy$. Let $y \in V \setminus[0,1)$. Then from (\ref{xleVz}) and (iv) of Lemma \ref{lemma_in_th} it follows that $x < \mathsf{m}(y) \leq \mathsf{M}(y)$, and so by definition of $R$, we get $xRy$.

    From (\ref{xaral}), (\ref{xright}), and (\ref{zleft}) it follows that $\langle X, \tau \rangle$ is $R$-bidirected along $[0, 1)$.
\end{proof}

\begin{corollary} \label{compnotim}
    If ${\it b}\mathbb{S}$ is a Hausdorff compactification of the Sorgenfrey line, then it is not a continuous open image of the Sorgenfrey line.
\end{corollary}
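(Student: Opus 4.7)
The plan is to apply Theorem \ref{densecodense} with $X = b\mathbb{S}$ and $S$ the canonically embedded copy of the Sorgenfrey line. Density of $S$ in $b\mathbb{S}$ is immediate from the definition of a compactification, and the homeomorphism $\langle S, \tau \upharpoonright S \rangle \cong \mathbb{S}$ holds by construction. So the only nontrivial task is to verify that $S$ is \emph{co-dense} in $b\mathbb{S}$, i.e., that $S$ has empty interior in $\langle b\mathbb{S}, \tau \rangle$.

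The key observation is that the Sorgenfrey line is not locally compact. I would first recall the standard fact that every compact subset of $\mathbb{S}$ is at most countable: the identity map from $\mathbb{S}$ to $\langle \mathbb{R}, \tau_{\mathbb{R}} \rangle$ is a continuous bijection, hence restricts to a homeomorphism on any compact $K \subseteq \mathbb{S}$, forcing the Sorgenfrey and Euclidean topologies on $K$ to coincide. This makes each $x \in K$ a non-left-accumulation point of $K$ in $\mathbb{R}$, from which countability follows via disjoint rational-bearing gaps. Since every nonempty Sorgenfrey-open set contains an interval $[a,b)$, which is uncountable, no such set can be covered by a compact subset of $\mathbb{S}$; in particular no point of $\mathbb{S}$ admits a compact neighborhood.

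Co-density then follows by a one-line contradiction: if $S$ had nonempty interior in $b\mathbb{S}$, pick a nonempty $\tau$-open $U \subseteq S$ and $x \in U$. By compactness and Hausdorffness (hence regularity) of $b\mathbb{S}$, there is a $\tau$-open $V \ni x$ with $\mathsf{Cl}_{\langle b\mathbb{S}, \tau\rangle}(V) \subseteq U \subseteq S$. Being closed in the compact space $b\mathbb{S}$, the set $\mathsf{Cl}(V)$ is compact; sitting inside $S$, it equips $x$ with a compact neighborhood in the subspace $\langle S, \tau \upharpoonright S \rangle \cong \mathbb{S}$, contradicting non-local-compactness of $\mathbb{S}$.

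With co-density established, Theorem \ref{densecodense} applies directly and yields the conclusion. The main (and really only) obstacle is the co-density step, which amounts to the standard non-local-compactness observation about $\mathbb{S}$; no new ideas beyond quoting the previous theorem are needed.
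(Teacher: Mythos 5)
Your proposal is correct and follows the paper's own route: the paper likewise reduces the corollary to Theorem~\ref{densecodense}, the only point needing verification being that the remainder $b\mathbb{S}\setminus\mathbb{S}$ is dense (equivalently, that $\mathbb{S}$ is co-dense in $b\mathbb{S}$), which it obtains by citing \cite[Ch.~4, Pr.~43]{9}. You supply a self-contained proof of that cited fact via the nowhere-local-compactness of the Sorgenfrey line, which is exactly the standard argument behind the reference, so the two proofs coincide in substance.
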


\begin{proof}
    By ${\it b}\mathbb{S}^*$ denote ${\it b}\mathbb{S} \setminus \mathbb{S}$. From \cite[Ch. 4, Pr. 43]{9} it follows that ${\it b}\mathbb{S}^*$ is a dense subset of ${\it b}\mathbb{S}$. Then from Theorem \ref{densecodense} it follows that ${\it b}\mathbb{S}$ is not a continuous open image of the Sorgenfrey line.
\end{proof}

\begin{lemma} \label{preservedbyclose}
    The property of being a continuous open image of the Sorgenfrey line is preserved by closed subspaces without isolated points.
\end{lemma}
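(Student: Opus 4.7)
The plan is to reduce the problem to $\mathbb{S}$ by pulling back. By Theorem~\ref{desc_op_im}, Lemma~\ref{space_with_SB_image}, and Corollary~\ref{st_vid_S}, fix a continuous open surjection $f\colon \mathbb{S}\to \langle Y,\tau\rangle$ and set $K:=f^{-1}(F)$. I first observe three things: (a) $K$ is closed in $\mathbb{S}$ because $f$ is continuous; (b) $g:=f{\upharpoonright}K\colon K\to F$ is a continuous open surjection, because for open $V\subseteq \mathbb{S}$ one has $g(V\cap K)=f(V)\cap F$ (any $y\in f(V)\cap F$ equals $f(x)$ for some $x\in V$, and $y\in F$ forces $x\in K$); and (c) $K$ has no isolated points, for if $\{x\}=U\cap K$ were open in $K$, then $f(U)\cap F = f(U\cap K) = \{f(x)\}$ would be open in $F$, contradicting the hypothesis on $F$.

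Given (a)--(c), it suffices to produce a continuous open surjection $h\colon \mathbb{S}\to K$; then $g\circ h\colon \mathbb{S}\to F$ witnesses the conclusion. By Theorem~\ref{desc_op_im} this is equivalent to constructing a Sorgenfrey base for $K$, which I plan to do by adapting the construction of ${\bf V}^{\mathbb{S}}$ from the preceding example inside $K$: define $U_a$ recursively, maintaining the invariant that each $U_a$ has the form $K\cap[l,r)$ with $l\in K$, and on subdividing pick an increasing sequence $l=x_0<x_1<\cdots$ of points of $K\cap[l,r)$ with $x_n\to \sup(K\cap[l,r))$ and $x_{n+1}-x_n\le 1/(\mathsf{lh}(a)+1)$, setting $U_{a\ \hat{}\ n}:=K\cap[x_n,x_{n+1})$. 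Property (S1) is built in: each $z\in K$ has the canonical branch obtained by always descending into the subinterval containing $z$, and this branch is a base branch in the subspace topology on $K$.

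The main obstacle is completeness, and hence (S2). Along a branch $q$ the nested intervals have a common $\mathbb{R}$-limit $v=\lim l_n=\lim r_n$, which must be shown to lie in $K$. The no-isolated-points hypothesis produces the partition sequences $(x_n)$ and ensures each open subset of $K$ has right-accumulation; the $\mathbb{S}$-closedness of $K$ is then used to upgrade the $\mathbb{R}$-convergence to $\mathbb{S}$-convergence by choosing the $x_n$ to be $\mathbb{R}$-left-accumulation points of $K$. Under this choice the right endpoints $r_n$ lie in $K$, so $r_n\ge v$ gives $r_n\to v$ in the Sorgenfrey topology and hence $v\in K$. Verifying that such left-accumulation points are right-dense in $K$ (so that the partition sequences can always be drawn from them) is the delicate technical point and invokes both hypotheses on $K$.
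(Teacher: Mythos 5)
Your reduction is exactly the paper's: pull $F$ back to $Z:=f^{-1}[F]$, note that $Z$ is closed in $\mathbb{S}$ because $f$ is continuous, that $f{\upharpoonright}Z$ is a continuous open surjection onto $F$, and that $Z$ has no isolated points because $F$ has none and $f{\upharpoonright}Z$ is open. Where you diverge is in what you do with $Z$: the paper finishes in one line by invoking Burke and Moore \cite[(iii) of Theorem 4.6]{8}, which says that a closed subspace of $\mathbb{S}$ without isolated points is homeomorphic to $\mathbb{S}$. You instead try to reprove the relevant part of that theorem by hand, building a Sorgenfrey base for $Z=K$. That is a legitimate, self-contained alternative in principle, but as written your construction does not go through.

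The concrete gap is that you require the partition points $x_0<x_1<\cdots$ both to lie in $K$ and to satisfy $x_{n+1}-x_n\le 1/(\mathsf{lh}(a)+1)$, and these demands are incompatible whenever $K$ has a gap longer than the mesh. For $K=[0,1)\cup[2,3)$ (closed in $\mathbb{S}$, no isolated points), any increasing sequence in $K$ tending to $\sup(K\cap[0,3))=3$ must at some step jump from a point below $1$ to a point at or above $2$, a step of length greater than $1$; so either the covering condition $U_a=\bigcup_n U_{a\ \hat{}\ n}$ fails (the sequence stalls below the gap) or the mesh condition fails, and such gaps can occur at every scale (a Cantor-type $K$), so no initial decomposition avoids this. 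The repair is to bound the \emph{diameter} of $K\cap[x_n,x_{n+1})$, i.e. $\sup(K\cap[x_n,x_{n+1}))-x_n$, rather than $x_{n+1}-x_n$; with that change the argument actually closes more simply than you suggest. Indeed, once all partition points lie in $K$, every right endpoint $r_n$ along a branch automatically lies in $K$ and strictly decreases to the common limit $v$ of the endpoints, so $\mathbb{S}$-closedness of $K$ gives $v\in K$ directly; your detour through ``$\mathbb{R}$-left-accumulation points of $K$,'' together with the right-density claim you explicitly leave unverified, is both unnecessary and the one place where your argument is officially incomplete. (The facts you do need --- that $\sup(K\cap[l,r))$ is never attained in $K\cap[l,r)$ and that $K\cap(x,x+\varepsilon)\ne\emptyset$ for all $x\in K$ --- are precisely the no-isolated-points hypothesis.) As submitted, then, the proof contains a step that provably fails and a key lemma that is only named, not proved; the paper's citation of Burke--Moore is doing all of this work.
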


\begin{proof}
    Let $\langle X, \tau \rangle$ be a topological space such that there exists a continuous open surjection $f: \mathbb{S} \rightarrow \langle X, \tau \rangle$. Let $F \subseteq X$ be a closed subset of $\langle X, \tau \rangle$ without isolated points and $Z := f^{-1}[F]$. Since $f$ is a continuous function, we have

    \begin{equation}\label{Zclosed}
        Z \text{ is a closed subset of } \mathbb{S}.
    \end{equation}

    From \cite[Ch. 2, Pr. 337]{9} it follows that $f {\upharpoonright} Z: Z_{\mathbb{S}} \rightarrow \langle F, \tau {\upharpoonright} F \rangle$ is a continuous open surjection. Since $\langle F, \tau {\upharpoonright} F \rangle$ has no isolated points and $f {\upharpoonright} Z$ is an open map, we see that

    \begin{equation}\label{Zperfect}
        Z \text{ has no isolated points}.
    \end{equation}

    From (\ref{Zclosed}), (\ref{Zperfect}) and \cite[(iii) of Theorem 4.6]{8} it follows that $Z_{\mathbb{S}} \cong \mathbb{S}$. And so $\langle F, \tau {\upharpoonright} F \rangle$ is a continuous open image of the Sorgenfrey line.
\end{proof}

\begin{corollary} \label{ifcontsnotim}
    Suppose that the Sorgenfrey line is embeddable in a Hausdorff compact space $\langle X, \tau \rangle$. Then $\langle X, \tau \rangle$ is not a continuous open image of the Sorgenfrey line.
\end{corollary}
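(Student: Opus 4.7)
The plan is to combine Corollary \ref{compnotim} with Lemma \ref{preservedbyclose} by passing to the closure of the embedded copy of the Sorgenfrey line. Concretely, I would let $S \subseteq X$ be the subspace homeomorphic to $\mathbb{S}$ and set $F := \mathsf{Cl}_{\langle X, \tau\rangle}(S)$. Since $X$ is compact Hausdorff and $F$ is closed in $X$, $F$ is itself compact Hausdorff, and by construction $S$ is dense in $F$. Thus $\langle F, \tau\upharpoonright F\rangle$ is a Hausdorff compactification of $\mathbb{S}$, so Corollary \ref{compnotim} gives that $F$ is not a continuous open image of $\mathbb{S}$.

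Next I would check that $F$ has no isolated points, which is the hypothesis needed to apply Lemma \ref{preservedbyclose}. The Sorgenfrey line has no isolated points, so $S$ has no isolated points as a space. Any candidate isolated point $x \in F$ satisfies: if $x \in S$, then $\{x\}$ would be open in $S$, contradicting the previous sentence; if $x \in F \setminus S$, then density of $S$ in $F$ forces every neighborhood of $x$ in $F$ to meet $S$, so $\{x\}$ is again not open. Hence $F$ has no isolated points.

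Finally, I would invoke the contrapositive of Lemma \ref{preservedbyclose}: if $\langle X, \tau\rangle$ were a continuous open image of $\mathbb{S}$, then its closed subspace $F$ without isolated points would also be a continuous open image of $\mathbb{S}$, contradicting what was established above. This completes the proof.

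No step in this argument looks like a genuine obstacle; all the heavy lifting has already been done in Theorem \ref{densecodense}, Corollary \ref{compnotim}, and Lemma \ref{preservedbyclose}. The only mildly delicate point is verifying the absence of isolated points in $F$, and that is immediate from density of $S$ together with the fact that $\mathbb{S}$ itself has no isolated points.
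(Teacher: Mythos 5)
Your proof is correct and follows essentially the same route as the paper: pass to the closure of the embedded copy, recognize it as a Hausdorff compactification of $\mathbb{S}$, apply Corollary \ref{compnotim}, and finish with Lemma \ref{preservedbyclose}. Your explicit verification that the closure has no isolated points is a detail the paper leaves implicit, and it is a welcome addition since that hypothesis is genuinely required by Lemma \ref{preservedbyclose}.
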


\begin{proof}
    Let $A \subseteq X$ be such that $\langle A, \tau {\upharpoonright} A \rangle \cong \mathbb{S}$. Let $S := \mathsf{Cl}_{\langle X, \tau \rangle}(A)$, then $\langle S, \tau {\upharpoonright} S \rangle$ is a compactification of $\mathbb{S}$. So from Corollary \ref{compnotim} it follows that
    \begin{equation}\label{Snotimage}
        \langle S, \tau {\upharpoonright} S \rangle \text{ is not a continuous open image of the Sorgenfrey line}.
    \end{equation}

    From Lemma \ref{preservedbyclose} and (\ref{Snotimage}) it follows that $\langle X, \tau \rangle$ is not a continuous open image of the Sorgenfrey line.
\end{proof}

\bigskip

\end{document}